\pgfplotsset{compat=1.14}
\newtheorem{theorem}{Theorem}[section]
\newtheorem{definition}[theorem]{Definition}
\newtheorem{lemma}[theorem]{Lemma}
\newtheorem{corollary}[theorem]{Corollary}
\newtheorem{remark}[theorem]{Remark}
\newtheorem{problem}[theorem]{Problem}
\newtheorem*{property*}{Property}
\newcommand{\R}{\mathbb{R}} 
\newcommand{\Sp}{\mathbb{S}^{n-1}}
\DeclareMathOperator{\dist}{dist}
\DeclareMathOperator{\spann}{span}
\DeclareMathOperator{\vol}{vol}
\begin{document}
\setcounter{footnote}{0}

 \title[Extremal arrangements of points in the sphere]{Extremal arrangements of points in the sphere for weighted cone-volume functionals}

    \author{Steven Hoehner and Jeff Ledford}
    \date{\today}

	\subjclass[2020]{Primary: 52A40; Secondary 52A38; 52B11; 52C99}
	\keywords{Jensen's inequality, $L_p$ surface area, mean width, polytope, simplex, surface area}	
\maketitle

\begin{abstract}
Weighted cone-volume functionals are introduced for the convex polytopes in $\mathbb{R}^n$. For these functionals, geometric inequalities are proved and the equality conditions are characterized.  A variety of corollaries are derived, including extremal properties of the regular polytopes involving the $L_p$ surface area. Some applications to crystallography and quantum theory are  also  presented. 
\end{abstract}

%%%%%%%%%%%%%%%%%%%%%%%%%%%%%%%%%%%%%%%%%%%%%%

\section{Introduction}

A fundamental question in convex and discrete geometry is to determine the arrangement of a finite set of points on a given sphere which optimizes some geometric functional. A notable example is the discrete isodiametric problem, which asks to determine, among all polytopes with a given number of vertices on the sphere, the one with maximal volume. Two problems which are closely related, and of equal significance, are determining the polytopes with a given number of vertices in the sphere that attain the greatest surface area, or the greatest mean width. These problems are not only interesting geometrically, but they also arise in a wide variety of applications, including crystallography \cite{DHL,Makovicky} and information theory \cite{KLZ2017, LitvakSMWC}.

Naturally, the optimal configurations in all of these problems tend to exhibit the highest degree of symmetry possible. For example, the regular tetrahedron, the regular octahedron and the regular icosahedron have the greatest volume, and greatest surface area, among all polytopes with 4, 6 or 12 vertices inscribed in the unit sphere  \cite{Toth-RegularFigures}. For background on the volume, surface area and mean width maximization problems, we refer the reader to, for example, \cite{BermanHanes1970,bezdek-langi, DHL,Toth-RegularFigures, Horvath, HL-2014,LitvakSMWC,Melnyk} and the references therein.

As generalizations of volume and surface area of polytopes in $\R^n$, we introduce the weighted cone-volume functionals, which are defined in terms of a concave or convex function applied to the facet heights or facet volumes. For a given number of points on the sphere, we determine the configurations of points that optimize these functionals in certain special cases, thereby generalizing several classical results on inscribed polytopes with maximum volume or maximum surface area.  The main results also yield a variety of corollaries on  polytopes inscribed in a sphere which maximize $L_p$ surface area. In particular, we show that the regular simplex maximizes the $L_p$ surface area (for $p\in[0,1]$) among all simplices inscribed in the sphere. For polytopes in $\R^3$, we also introduce generalizations of the mean width called the weighted edge-curvature functionals, where a concave or convex function is applied to the dihedral angles or edge lengths of the polytope. Sharp geometric inequalities are proved for these functionals and the equality conditions are characterized.

The $L_p$ surface area measure is an extension of the classical surface area measure on convex bodies in $\R^n$ that contain the origin in their interiors. It was originally defined  by Lutwak for $p>1$ in the groundbreaking work \cite{Lutwak93}, which was instrumental in the subsequent development of the $L_p$ Brunn-Minkowski-Firey theory. In the case that a polytope $Q$ is chosen for the body and $p\in\R$, the \emph{$L_p$ surface area} $S_p(Q)$ of $Q$ can be expressed as
\[
S_p(Q)=\sum_{F\in\mathcal{F}_{n-1}(Q)}\dist(o,F)^{1-p}\vol_{n-1}(F),
\]
where $\mathcal{F}_{n-1}(Q)$ is the set of facets ($(n-1)$-dimensional faces) of $Q$,  $\vol_{n-1}(F)$ is the $(n-1)$-dimensional volume of the facet $F$ and $\dist(o,F)$ is the distance from the origin $o$ to (the affine hull of) $F$. In particular, $S_1(Q)=\vol_{n-1}(\partial Q)$ is the classical surface area of $Q$ and $n^{-1}S_0(Q)=\vol_n(Q)$ is the $n$-dimensional volume of $Q$. For more background on the $L_p$ surface area measure and the $L_p$ Brunn-Minkowski-Firey theory, see Chapters 9.1 and 9.2 of Schneider's book \cite{SchneiderBook}, for example. 

In the past decade or so, the Orlicz-Brunn-Minkowski theory has  emerged as a generalization of the $L_p$ theory. It has been studied extensively and developed rapidly, and many of the results from the $L_p$ theory have been successfully translated into the Orlicz setting; see \cite{HLYZ2010,JianLu2019,  LudwigReitzner, LYZ2010-2, LYZ2010-1} and the references therein for some examples. In particular, an Orlicz extension of the $L_p$ surface area has been sought \cite{HP2014}, and Zou and Xiong \cite{ZX2014} gave the following definition. For a convex body $K$ in $\R^n$ that contains the origin in its interior and an increasing concave function $\phi:[0,\infty)\to[0,\infty)$ with $\phi(0)=0$, the \emph{Orlicz surface area} $S_{\phi,{\rm Orlicz}}(K)$ of $K$ was defined in \cite{ZX2014} by
\begin{equation}\label{OrliczSA}
S_{\phi,{\rm Orlicz}}(K)=\int_{\partial K}\phi\left(\frac{1}{\langle x,\nu_K(x)\rangle}\right)\langle x,\nu_K(x)\rangle\,{\rm d}S_K(x),
\end{equation}
where $\partial K$ is the boundary of $K$, $\nu_K$ is the Gauss map of $K$ and $S_K$ is the classical surface area measure of $K$. In particular, if $Q$ is a polytope in $\R^n$ that contains the origin $o$ in its interior, then \eqref{OrliczSA} becomes
\begin{equation}\label{Orlicz-SA}
S_{\phi,{\rm Orlicz}}(Q) = \sum_{F\in\mathcal{F}_{n-1}(Q)} \phi\left(\dist(o,F)^{-1}\right) \dist(o,F) \vol_{n-1}(F).
\end{equation}
Furthermore, when $\phi_p(t)=t^{p}$ is chosen in \eqref{Orlicz-SA} for $p\in\R$, one recovers the  $L_p$ surface area of $Q$. In particular, if $\phi_1(t)=t$ then $S_{\phi_1,{\rm Orlicz}}(Q)=S_1(Q)=\vol_{n-1}(\partial Q)$, and if $\phi_0(t)=1$ then    $n^{-1}S_{\phi_0,{\rm Orlicz}}(Q)=n^{-1}S_0(Q)=\vol_n(Q)$.  In this article, we introduce  similar (but distinct) Orlicz surface area type functionals on the polytopes in $\R^n$, and we prove  sharp geometric inequalities for them. To formulate our main results, we first state some definitions and notation.

%%%%%%%%%%%%%%%%%%%%%%%%%%%%%

\section{Definitions and notation}

Define the following classes of weight functions:
\begin{align*}
    {\rm Conc}_{(0,\infty)}&:=\left\{\varphi: (0,\infty)\to(0,\infty) \,\big| \, \varphi\text{ is concave}\right\}\\
    {\rm Conv}_{(0,\infty)}&:=\left\{\psi: (0,\infty)\to(0,\infty) \,\big| \, \psi\text{ is convex}\right\}.
\end{align*}
We will use the corresponding arrow to denote the subset of increasing (or decreasing) functions.  For example, $\varphi\in{\rm Conc}_{(0,\infty)}^{\uparrow}$ means that $\varphi\in{\rm Conc}_{(0,\infty)}$ and $\varphi$ is increasing, while $\psi\in{\rm Conv}_{(0,\infty)}^{\downarrow}$ means that $\psi\in{\rm Conv}_{(0,\infty)}$ and $\psi$ is decreasing. 

Let $\mathcal{P}_n$ denote the family of all convex polytopes in $\R^n$ which contain the origin $o$ in their interiors.  For $Q\in\mathcal{P}_n$, we denote the facets of $Q$ by $F_1,\ldots,F_{N_Q}$, where $N_Q:=|\mathcal{F}_{n-1}(Q)|$ is the number of facets of $Q$.  Also, let $r(Q)$ denote the radius of a largest sphere contained in $Q$ (note that such a sphere need not be unique), and let $R(Q)$ denote the radius of the smallest sphere containing $Q$. 
We will make use of the following auxiliary quantities associated to $Q\in\mathcal{P}_n$:
\begin{align*}
\overline{F_Q} &:= \dfrac{\vol_{n-1}(\partial Q)}{N_Q} \qquad \text{and} \qquad
\overline{h_Q} := \sum_{j=1}^{N_Q} \dist(o,F_j) \dfrac{\vol_{n-1}(F_j)}{\vol_{n-1}(\partial Q)}.% \quad\text{and}\quad H_Q:=\sum_{j=1}^{N_Q}\dist(o,F_j) .
\end{align*}
We say that $Q$ is \emph{equiareal} if $\vol_{n-1}(F_j)=\overline{F_Q}$ is constant for all $j\in\{1,\ldots,N_Q\}$.  
%Note that if $r(Q)$ denotes the radius of the largest sphere contained in $Q$, then by the isoperimetric inequality, \[
%r(Q)\leq \overline{h_Q}=\frac{n\vol_n(Q)}{\vol_{n-1}(\partial Q)}\leq \frac{n\vol_n(B_n)\vol_{n-1}(\partial Q)^{\frac{n}{n-1}}}{\vol_{n-1}(\partial B_n)^{\frac{n}{n-1}}\vol_{n-1}(\partial Q)}=\left(\frac{\vol_{n-1}(\partial Q)}{\vol_{n-1}(\partial B_n)}\right)^{\frac{1}{n-1}}
%\]
%where $\vol_n(Q)$ is the $n$-dimensional volume (Lebesgue measure) of $Q$ and $B_n$ is the $n$-dimensional Euclidean unit ball. Since $Q$ is a polytope, the last inequality  is strict.
Let $\mathcal{I}_n$ denote the set of polytopes in $\mathcal{P}_n$ which are inscribable in the unit sphere $\mathbb{S}^{n-1}=\{(x_1,\ldots,x_n)\in\R^n:\,\sum_{j=1}^n x_j^2=1\}$. We let $\mathcal{P}^{{\rm in}}_n$ be the subset of polytopes $Q$ in $\mathcal{P}_n$ that admit an insphere which is tangent to each facet of $Q$. For $Q\in\mathcal{P}^{{\rm in}}_n$, we let $r_Q$ and $i_Q$ denote the inradius and incenter of $Q$, respectively.% Note that if $Q\in\mathcal{P}_n^{{\rm in}}$, then $r_Q=r(Q)$. % For an integer $K\geq n+1$, we also let $\mathcal{P}_{n,K}$ (respectively, $\mathcal{I}_{n,K}$) denote the set of polytopes in $\mathcal{P}_n$ (resp. $\mathcal{I}_n$) with at most $K$ vertices.

Our main results are concerned with the following Orlicz-type surface area functionals.

\begin{definition}\label{mainDef}
For a  (convex or concave) function $g:(0,\infty)\to(0,\infty)$, the $g$-\emph{weighted cone-volume functional} $S_g:\mathcal{P}_n\to(0,\infty)$ is defined by

%For $\varphi\in{\rm Conc}_{(0,\infty)}$ and $\psi\in{\rm Conv}_{(0,\infty)}$, the \emph{$\varphi$-weighted cone-volume functional} $S_\varphi: \mathcal{P}_n\to(0,\infty)$ and  the \emph{$\psi$-weighted cone-volume functional} $S_\psi:\mathcal{P}_n\to(0,\infty)$ are defined, respectively, by  
\begin{equation*}
S_g(Q):=\sum_{j=1}^{N_Q} g(\dist(o,F_j))\vol_{n-1}(F_j).
%\qquad\text{and}\qquad 
%S_\psi(Q):=\sum_{j=1}^{N_Q} \psi(\dist(o,F_j))\vol_{n-1}(F_j).
\end{equation*}
\end{definition}

Some of the results in this paper will also be formulated for the weighted cone-volume functionals defined in the following remark.
\begin{remark}\label{starred}
One may similarly define the functional $S^{\rm in}_{g}:\mathcal{P}_n^{{\rm in}}\to(0,\infty)$ by 
\[S_{g}^{{\rm in}}(Q):=\sum_{j=1}^{N_Q} \dist(i_Q,F_j)\,g\left(\vol_{n-1}(F_j)\right).\]
\end{remark}

Each of the functionals in Definition \ref{mainDef} and Remark \ref{starred} may be viewed as weighted versions of the familiar \emph{cone-volume formula} applied to the polytope $Q\in\mathcal{P}_n$, namely,
\[
n\vol_{n}(Q) = \sum_{j=1}^{N_Q} \dist(x,F_j)\vol_{n-1}(F_j)
\]
where $x$ is any point in the interior of $Q$ and $\dist(x,F_j)$ is the distance from $x$ to the (affine hull of the) facet $F_j$. Moreover, choosing the function $\varphi_p(t):=t^{1-p}$ with $p\in[0,1]$ (respectively, $\psi_p(t)=t^{1-p}$ with $p\geq 1$ or $p<0$) in Definition \ref{mainDef}, we recover the $L_p$ surface area of $Q$:
\[
S_{\varphi_p}(Q)=S_p(Q)=\sum_{j=1}^{N_Q}\dist(o,F_j)^{1-p}\vol_{n-1}(F_j).
\]

Several other cone-volume type functionals on polytopes (and convex bodies) have been studied in the literature, including the $U$-functional,  which is closely connected to the famous LYZ conjectures. For more background, we refer the reader to, for example, \cite{BH-adv,HLL,Henk-Linke,LYZ-conevolume,SX2,Xiong-2010,Yanping-Binwu} and the references therein.

%%%%%%%%%%%%%%%%%%%%%%%%%%%%%%%%%%%%%%%
\section{Main results}

Our first result provides general bounds for the weighted cone-volume functionals of a polytope in terms of its surface area.

\begin{theorem}\label{mainThm}
Let $\varphi\in{\rm Conc}_{(0,\infty)}$.
\begin{itemize}
    \item[(i)] If $Q\in\mathcal{P}_n$, then $S_\varphi(Q) \leq \varphi\left(\overline{h_Q}\right) \vol_{n-1}\left(\partial Q\right)$. Equality holds if and only if $\varphi$ is affine or $Q\in\mathcal{P}^{{\rm in}}_n$ and the incenter of $Q$ is the origin. 
    
    \item[(ii)] If $Q\in\mathcal{P}^{{\rm in}}_n$, then  $S_{\varphi}^{{\rm in}}(Q)\leq \varphi\left(\overline{F_Q}\right)H_Q$ where  $H_Q:=\sum_{j=1}^{N_Q}\dist(i_Q,F_j)$.  Equality holds if and only if $\varphi$ is affine or $Q$ is equiareal.
\end{itemize}
If $\varphi$ is replaced by $\psi\in{\rm Conv}_{(0,\infty)}$ in (i) or (ii), then the reverse inequalities hold with the same equality conditions.
\end{theorem}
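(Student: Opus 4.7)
The plan is to reduce both inequalities to Jensen's inequality applied to carefully chosen probability measures on the facets of $Q$, and then to read off the equality cases from the standard equality analysis of Jensen.

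For part (i), I would introduce the probability weights $w_j := \vol_{n-1}(F_j)/\vol_{n-1}(\partial Q)$ on the index set $\{1,\ldots,N_Q\}$. These weights are strictly positive (facets have positive area) and sum to $1$. With this notation,
\[
S_\varphi(Q) = \vol_{n-1}(\partial Q)\sum_{j=1}^{N_Q} w_j\,\varphi\bigl(\dist(o,F_j)\bigr), \qquad \overline{h_Q} = \sum_{j=1}^{N_Q} w_j\,\dist(o,F_j).
\]
Applying Jensen's inequality to the concave $\varphi$ with the discrete probability measure $\sum_j w_j\delta_{\dist(o,F_j)}$ immediately yields the claimed upper bound. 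For the equality discussion: if $\varphi$ is affine, equality is automatic. Otherwise, since all $w_j$ are strictly positive, equality in Jensen's inequality forces $\dist(o,F_j)$ to be a common value $r$ for all $j$. Because $o\in\mathrm{int}(Q)$, this means the ball of radius $r$ centered at $o$ is tangent to every facet of $Q$, i.e., $Q\in\mathcal{P}_n^{\mathrm{in}}$ with $i_Q=o$. Conversely, in that case all heights collapse to $r_Q$ and equality is clear.

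For part (ii), the key observation is that for $Q\in\mathcal{P}_n^{\mathrm{in}}$ one has $\dist(i_Q,F_j)=r_Q$ for every $j$, so $H_Q=N_Q r_Q$ and
\[
S_\varphi^{\mathrm{in}}(Q) = r_Q\sum_{j=1}^{N_Q}\varphi\bigl(\vol_{n-1}(F_j)\bigr).
\]
The claim therefore reduces to $\frac{1}{N_Q}\sum_j \varphi(\vol_{n-1}(F_j))\le \varphi(\overline{F_Q})$, which is Jensen's inequality for the uniform probability measure on the facets. Equality for non-affine $\varphi$ forces $\vol_{n-1}(F_j)$ to be constant, i.e., $Q$ equiareal, and conversely.

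Finally, the assertion for $\psi\in\mathrm{Conv}_{(0,\infty)}$ follows by the exact same two arguments with the direction of Jensen's inequality reversed; the equality analysis is identical because it only uses the strict-convexity/affineness dichotomy and the positivity of the weights. I do not foresee a genuine technical obstacle here: the main point is simply to recognize that both weighted cone-volume functionals are expectations against natural probability measures on the facet set, and to be careful that in part~(i) the measure is area-weighted while in part~(ii) it is uniform. The only subtlety worth stating explicitly is that all weights are strictly positive, which is what guarantees that the Jensen equality case translates into a pointwise identity of the distances (part (i)) or of the facet areas (part (ii)) rather than merely an almost-sure statement.
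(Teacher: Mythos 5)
Your proposal is correct and follows essentially the same route as the paper: both parts are Jensen's inequality applied to the area-weighted probability measure on the facets in (i) and, after observing that $\dist(i_Q,F_j)=r_Q$ for all $j$, to the uniform measure in (ii), with the equality cases read off from the strict positivity of the weights. Your explicit remark that the weights $\dist(i_Q,F_j)/H_Q$ collapse to $1/N_Q$ is exactly the (implicit) reason the paper's Jensen step in (ii) lands on $\varphi(\overline{F_Q})$, so nothing is missing.
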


The regular simplex is a cornerstone of convex and discrete geometry, often arising as the solution to geometric extremal problems. For example,  among all  simplices inscribed in the unit sphere $\mathbb{S}^{n-1}$, the regular one has the greatest volume, and the greatest surface area. We prove a generalization in the following corollary.

\begin{corollary}\label{mainThm2}
Let $\varphi\in{\rm Conc}_{(0,\infty)}^{\uparrow}$ and let $T\in\mathcal{I}_n$ be a simplex. Then
    \begin{align*}\label{simplexineq}
     S_\varphi(T) &\leq  \dfrac{(n+1)^{\frac{n+1}{2}}}{n^{\frac{n}{2}-1}(n-1)!}\cdot\varphi\left(\frac{1}{n}\right) \qquad\text{ and } 
     \nonumber\\
     S^{{\rm in}}_\varphi(T) &\leq \dfrac{n+1}{n} \cdot\varphi\left(\dfrac{(n+1)^{\frac{n-1}{2}}}{n^{\frac{n}{2}-1}(n-1)!}\right).
    \end{align*}
 Equality holds in each case  if and only if $T$ is regular. 
\end{corollary}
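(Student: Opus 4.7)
My approach is to apply Theorem~\ref{mainThm} and then exploit two classical extremal properties of simplices inscribed in $\Sp$.

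As a preliminary observation, every $n$-simplex admits an insphere. Combining the cone-volume formula $n\vol_n(T)=\sum_j\dist(o,F_j)\vol_{n-1}(F_j)$ (valid since $o\in\mathrm{int}(T)$) with the identity $n\vol_n(T)=r_T\vol_{n-1}(\partial T)$ obtained by integrating from the incenter yields $\overline{h_T}=r_T$. One also has $\overline{F_T}=\vol_{n-1}(\partial T)/(n+1)$ and $H_T=(n+1)\,r_T$ directly from the definitions. Next I would invoke two classical facts for $T\in\mathcal{I}_n$: (a) the generalized Euler inequality $R\ge n\,r$ for $n$-simplices, which together with circumradius $1$ yields $r_T\le 1/n$; and (b) the fact that the regular simplex is the unique surface-area maximizer among simplices inscribed in $\Sp$, so $\vol_{n-1}(\partial T)\le A^*:=(n+1)^{(n+1)/2}/(n^{n/2-1}(n-1)!)$. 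In both, equality holds iff $T$ is regular.

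With these in hand, Theorem~\ref{mainThm}(i) and the monotonicity of $\varphi$ give
\[
S_\varphi(T)\le \varphi(r_T)\,\vol_{n-1}(\partial T)\le \varphi(1/n)\,A^*,
\]
which is the first claimed bound. Theorem~\ref{mainThm}(ii) gives analogously
\[
S^{\rm in}_\varphi(T)\le (n+1)\,r_T\,\varphi\!\left(\frac{\vol_{n-1}(\partial T)}{n+1}\right)\le \frac{n+1}{n}\,\varphi\!\left(\frac{A^*}{n+1}\right),
\]
and the identity $A^*/(n+1)=(n+1)^{(n-1)/2}/(n^{n/2-1}(n-1)!)$ recovers the stated constant. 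Equality throughout the two chains forces both $r_T=1/n$ and $\vol_{n-1}(\partial T)=A^*$, each of which characterizes the regular simplex.

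The main obstacle, as I see it, is simply locating the two classical facts with sharp equality cases (the $n=2$ case of (a) is Euler's $R\ge 2r$, while the surface-area maximization in (b) is in the spirit of \cite{Toth-RegularFigures}); no new estimates are required beyond Theorem~\ref{mainThm}. A minor secondary point is the affine subcase of $\varphi$, in which Theorem~\ref{mainThm}(i) is automatically tight, but the second inequality in each chain then still forces $\vol_{n-1}(\partial T)=A^*$, hence $T$ is regular.
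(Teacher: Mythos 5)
Your proposal is correct and follows essentially the same route as the paper: Theorem~\ref{mainThm} combined with Euler's inequality $R\ge nr$ (giving $r_T\le 1/n$) and Tanner's surface-area maximality of the regular inscribed simplex, with $\overline{h_T}=r_T$, $\overline{F_T}=\vol_{n-1}(\partial T)/(n+1)$ and $H_T=(n+1)r_T$ feeding into the two bounds exactly as in the paper. Your extra remark on the affine subcase of the equality discussion is a small refinement of the same argument, not a different approach.
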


Inequalities concerning polytopes combinatorially equivalent to the other regular polytopes in $\mathbb{R}^3$, analogous to those in Corollary \ref{mainThm2}, also hold. We omit their statements for brevity.

\begin{remark}\label{volume remark}
By letting $\varphi_p(t)=t^{1-p}$ with $p\in[0,1]$, the inequality for $S_{\varphi_p}(T)$ in Corollary \ref{mainThm2} represents an \emph{$L_p$ interpolation} of the following results.  Taking $p=0$, we recover the classical result which states that the regular simplex has greatest volume among all inscribed simplices. Choosing $p=1$, we recover the result of Tanner \cite{tanner} which states that the regular simplex has greatest surface area among all inscribed simplices. In general, if the volume and surface area maximizers coincide and admit an inradius, then this polytope is also the $L_p$ surface area maximizer for all $p\in[0,1]$. This is explained in more detail in Subsection \ref{littlewood-sec}.
\end{remark}

\begin{remark}\label{T-fn-rmk}
The weighted cone-volume functionals  are  related to the \emph{$T$-functional} of a polytope, introduced by Wieacker \cite{WieackerThesis} in stochastic geometry. For a polytope $Q\in\mathcal{P}_n$, parameters $a,b\geq 0$ and $j\in\{0,1,\ldots,n\}$, it is defined as \[T_{a,b}^{n,j}(Q)=\sum_{F\in\mathcal{F}_j(Q)} \dist(o,F)^a \vol_j(F)^b,\] where  $\mathcal{F}_j(Q)$ is the set of all $j$-dimensional faces of $Q$. The $T$-functional has been studied for various models of random polytopes; for some recent examples, see  \cite{HLRT-2022, KMTT-2019, KabluchkoEtAl2019}. Note that if $\varphi\in{\rm Conc}(0,\infty)$ is $a$-homogeneous, then
\[
S_\varphi(Q)=\varphi(1)\sum_{F\in\mathcal{F}_{n-1}(Q)}\dist(o,F)^a\vol_{n-1}(F)=\varphi(1)T_{a,1}^{n,n-1}(Q).
\]
In  particular, $T_{1-p,1}^{n,n-1}(Q)=S_p(Q)$. In the same way, if $\psi\in{\rm Conv}(0,\infty)$ is $a$-homogeneous, then $S_\psi(Q)=\psi(1)T_{a,1}^{n,n-1}(Q)$.
\end{remark}

\begin{remark}
    A result closely related to Corollary \ref{mainThm2} was shown very recently in \cite[Cor. 3.3]{Matzke-etal}. Let $T\in\mathcal{I}_n$ be a simplex and let $\triangle_n\in\mathcal{I}_n$ denote the regular inscribed simplex. Given $j\in\{1,\ldots,n\}$ and $s\in(0,2]$, we have $\sum_{F\in\mathcal{F}_j(T)}\vol_j(F)^s \leq \sum_{F'\in\mathcal{F}_j(\triangle_n)}\vol_j(F')^s$ with equality if and only if $T$ is regular.
\end{remark}

%%%%%%%%%%%%%%%%%%%%
\section{Proofs of Theorem \ref{mainThm} and Corollary \ref{mainThm2}}

We will  use the following formulation of Jensen's inequality.
\begin{lemma}
 If $f$ is a concave function and $\lambda_1,\ldots,\lambda_N\geq 0$ satisfy $\sum_{i=1}^N \lambda_i=1$, then $\sum_{i=1}^N\lambda_i f(x_i) \leq f(\sum_{i=1}^N\lambda_i x_i)$. The inequality is reversed if $f$ is convex. Equality holds if and only if $x_1=\ldots=x_N$ or $f$ is affine. 
\end{lemma}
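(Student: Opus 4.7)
The plan is to prove the concave case by induction on $k$, and then recover the convex case by applying the concave statement to $-f$ (noting that $-f$ is concave iff $f$ is convex, and equality is preserved). The forward inequality is standard, but the equality characterization requires extra care.

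The base case $k=1$ is trivial, and $k=2$ is exactly the definition of concavity: $\lambda_1 f(x_1) + \lambda_2 f(x_2) \leq f(\lambda_1 x_1 + \lambda_2 x_2)$ whenever $\lambda_1,\lambda_2 \geq 0$ sum to $1$. The matching equality analysis rests on an elementary fact: if $f$ is concave on an interval and $f(\lambda a + (1-\lambda)b) = \lambda f(a) + (1-\lambda)f(b)$ for some $\lambda \in (0,1)$ with $a \neq b$, then $f$ agrees with its chord on all of $[a,b]$, i.e.\ $f$ is affine on $[a,b]$. This follows by applying concavity to any sub-segment $[a,c] \subseteq [a,b]$ and to $[c,b]$ and observing that strict concavity anywhere would contradict the already-attained chord value at the initial interior point.

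For the inductive step, assume the inequality holds for $k-1$ points. Given weights $\lambda_1,\ldots,\lambda_k$ summing to $1$, we may assume every $\lambda_i > 0$, since a vanishing weight immediately reduces the claim to the $(k-1)$-case. Set $\mu := 1 - \lambda_k \in (0,1)$ and $\bar x := \sum_{i=1}^{k-1} (\lambda_i/\mu)\, x_i$, so that the weights $\lambda_i/\mu$ on $x_1,\ldots,x_{k-1}$ form a convex combination. Then
\[
\sum_{i=1}^k \lambda_i x_i \;=\; \mu\, \bar x + (1-\mu)\, x_k.
\]
Two-point concavity yields $f\bigl(\sum \lambda_i x_i\bigr) \geq \mu f(\bar x) + (1-\mu) f(x_k)$, and the inductive hypothesis gives $f(\bar x) \geq \sum_{i=1}^{k-1}(\lambda_i/\mu) f(x_i)$; combining produces the required inequality $f\bigl(\sum \lambda_i x_i\bigr) \geq \sum_{i=1}^{k} \lambda_i f(x_i)$.

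For the equality characterization, sufficiency is immediate: if all $x_i$ coincide both sides equal $f(x_1)$, and if $f$ is affine on the convex hull of $\{x_1,\ldots,x_k\}$ then affinity commutes with convex combinations. For necessity, suppose equality holds but not all $x_i$ agree. Then equality must be achieved at both the two-point step and the inductive step above. The two-point equality (when $\bar x \neq x_k$) forces $f$ to be affine on the segment joining $\bar x$ and $x_k$, while the inductive equality, recursively unpacked, forces affinity on each segment arising along the way; together these segments cover the convex hull of $\{x_1,\ldots,x_k\}$, yielding affinity there. The main obstacle is this equality bookkeeping: one must handle the degenerate case $\bar x = x_k$ (reorder the $x_i$ so that some pair of indices with distinct values is split across the two-point step) and verify that the collection of sub-intervals on which $f$ is affine truly assembles into the full convex hull of the data. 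Once that is settled, the convex case follows immediately by passing to $-f$.
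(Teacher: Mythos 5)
The paper states this lemma as a standard formulation of Jensen's inequality and supplies no proof of its own, so there is no in-paper argument to compare against; your induction proof is the textbook route and the inequality part is complete and correct. The only place you stop short is the equality ``bookkeeping'' you flag as the main obstacle, and it is worth recording that it resolves more easily than you fear. The degenerate case $\bar x = x_k$ requires no reordering: if $\bar x = x_k$ then $x_k$ already lies in $I:=\conv\{x_1,\dots,x_{k-1}\}$, and the inductive conclusion (either $x_1=\dots=x_{k-1}$, whence all $k$ points coincide with $\bar x$, or $f$ is affine on $I$, which already contains $x_k$) closes the case. The assembly of sub-intervals is likewise automatic: when the first $k-1$ points do not all coincide, $\bar x$ is an \emph{interior} point of $I$ because all the weights $\lambda_i/\mu$ are strictly positive, so $[\bar x, x_k]$ and $I$ overlap in an interval of positive length; the two affine restrictions agree there and hence glue to a single affine function on $I\cup[\bar x,x_k]\supseteq \conv\{x_1,\dots,x_k\}$. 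This positive-length overlap is genuinely needed (a concave function affine on two intervals meeting only at a point can have a kink there), so it is good that your construction supplies it. Finally, note that the equality condition as stated in the lemma is only literally correct if one first discards indices with $\lambda_i=0$ and reads ``affine'' as ``affine on the convex hull of the remaining $x_i$''; your proof silently adopts this (correct) reading, which is also the way the lemma is used in the paper.
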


\subsection{Proof of Theorem \ref{mainThm}}

Let $Q\in\mathcal{P}_n$. Denote the facets of $Q$ by $F_1,\ldots, F_{N_Q}$ and set $h_j:=\dist(o,F_j)$.  Since $\vol_{n-1}(\partial Q) = \sum_{j=1}^{N_Q}\vol_{n-1}(F_j)$, we may express $\overline{h_Q}$ as the following convex combination of the $h_j$:
\[
\overline{h_Q} = \sum_{j=1}^{N_Q} \dfrac{\vol_{n-1}(F_j)}{\vol_{n-1}(\partial Q)} h_j.
\]
Thus Jensen's inequality yields
\[
S_\varphi(Q) \leq \varphi\left(\overline{h_Q}\right)\vol_{n-1}(\partial Q),
\]
with equality if and only if $\varphi$ is affine or all of the $h_j$ are the same. 
The latter condition implies that $h_j=r_Q$ for all $j$ and that the incenter of $Q$ is the origin (that is, $Q\in\mathcal{P}^{{\rm in}}_n$ and $i_Q=o$). For $Q\in\mathcal{P}^{{\rm in}}_n$, by Jensen's inequality we have
\begin{align*}
\dfrac{S^{{\rm in}}_{\varphi}(Q)}{H_Q} &= \sum_{j=1}^{N_Q}\dfrac{\dist(i_Q,F_j)}{H_Q}\varphi\left(\vol_{n-1}(F_j)\right)
\leq \varphi \left( \overline{F_Q} \right).
\end{align*}
Equality holds if and only if $\varphi$ is affine or $\vol_{n-1}(F_j) = \overline{F_Q}$ for all $j\in\{1,\ldots,N_Q\}$ (that is, $Q$ is equiareal). 

If $\varphi$ is replaced by $\psi\in{\rm Conv}_{(0,\infty)}$ in either of the above proofs, then in each case the direction of Jensen's inequality reverses and the same equality conditions hold. \qed

\vspace{3mm}

 To prove Corollary \ref{mainThm2}, we also need the following extremal property of the regular simplex in $\R^n$, which is called \emph{Euler's inequality}. A proof was given by L. Fejes T\'oth \cite[pp. 312--313]{Toth-RegularFigures}, where it was shown that the simplices of maximum and minimum volume inscribed and circumscribed, respectively, to a given sphere are regular.  Another short proof can be found in the article \cite{KT1979} by Klamkin and Tsintsifas, for example.

\begin{lemma}[Euler's inequality]\label{ratiolemma}
If $T$ is a simplex in $\R^n$ with inradius $r_T$ and circumradius $R_T$, then $R_T\geq nr_T$ with equality if and only if $T$ is regular.
\end{lemma}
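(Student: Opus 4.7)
The plan is to deduce Euler's inequality from two classical volume-extremal properties of the regular simplex, both contained in the cited reference \cite{Toth-RegularFigures}: (A) the regular $n$-simplex uniquely maximizes $\vol_n$ among all $n$-simplices inscribed in a sphere of radius $R$; and (B) it uniquely minimizes $\vol_n$ among all $n$-simplices circumscribed to a ball of radius $r$.

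Granting (A) and (B), the conclusion follows by a scaling comparison. Write $V_n$ for the volume of the regular $n$-simplex inscribed in the unit sphere. In the regular case the circumcenter coincides with the centroid, which divides each median in the ratio $n{:}1$ from the vertex, so the circumradius equals $n$ times the inradius; hence the regular simplex with inradius $r$ has volume $V_n(nr)^n$. Applied to the given $T$, (A) gives $\vol_n(T)\le V_n R_T^n$ and (B) gives $\vol_n(T)\ge V_n n^n r_T^n$, and chaining these forces $n r_T\le R_T$. Equality compels equality in both (A) and (B), which by their uniqueness clauses happens iff $T$ is regular.

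The real content is therefore (A); statement (B) admits a parallel proof (or can be recovered from (A) via polarity). I would prove (A) by induction on $n$, the base case $n=2$ being the familiar fact about equilateral triangles in a circle. For the inductive step, fix a facet $F_0$ and use $\vol_n(T)=h_0\vol_{n-1}(F_0)/n$. If the hyperplane of $F_0$ sits at distance $d\in[0,R]$ from the circumcenter, then $F_0$ is inscribed in an $(n{-}1)$-sphere of radius $\sqrt{R^2-d^2}$, so by induction $\vol_{n-1}(F_0)\le V_{n-1}(R^2-d^2)^{(n-1)/2}$, while $h_0\le R+d$ (the max being achieved when the opposite vertex is the point on the circumsphere farthest from the hyperplane of $F_0$). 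Maximizing $(R+d)^{(n+1)/2}(R-d)^{(n-1)/2}$ in $d$ via the logarithmic derivative yields the unique interior critical point $d=R/n$, which exactly reproduces the data of the regular simplex; tracing the equality cases of the two induction hypotheses then supplies uniqueness. The main obstacle is keeping the equality tracking clean through the inductive step, since equality in the bound on $\vol_n(T)$ must simultaneously force regularity of $F_0$, antipodality of $v_0$, and $d=R/n$.
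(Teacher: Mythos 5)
Your reduction is correct and is exactly the route the paper itself indicates: the paper gives no proof of this lemma but cites Fejes T\'oth's argument, which it explicitly describes as deriving the inequality from the facts that the maximum-volume inscribed and minimum-volume circumscribed simplices to a given sphere are regular --- precisely your statements (A) and (B). Your additional inductive sketch of (A) (slicing off a facet, bounding $\vol_{n-1}(F_0)\le V_{n-1}(R^2-d^2)^{(n-1)/2}$ and $h_0\le R+d$, and maximizing at $d=R/n$) is sound, and goes beyond what the paper supplies.
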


\subsection{Proof of Corollary \ref{mainThm2}}
Let $T\in\mathcal{I}_n$ be any inscribed simplex. Since $T\in\mathcal{P}^{{\rm in}}_n$, we have $r_T=\overline{h_T}$. Since $R_T=1$, by Euler's inequality $r_T\leq r_{\triangle_n}= 1/n$ with equality  if and only if $T$ is regular. Thus, since $\varphi$ is increasing we have $\varphi(r_T)\leq\varphi(1/n)$ with equality if and only if $r_T=1/n$ (or equivalently, if and only if $T$ is regular). Let $\mathcal{T}_n$ denote the set of simplices in $\mathcal{I}_n$. Tanner \cite{tanner} showed that
\begin{equation}\label{max-SA-simplex}
\max_{T'\in\mathcal{T}_n}\vol_{n-1}(\partial T')= \vol_{n-1}(\partial\triangle_n)=\frac{(n+1)^{\frac{n+1}{2}}}{n^{\frac{n}{2}-1}(n-1)!}.
\end{equation}
Therefore, by Theorem \ref{mainThm}, Lemma \ref{ratiolemma} and  since $\varphi$ is increasing,
\begin{align*}
    S_\varphi(T) &\leq \varphi(r_T)\vol_{n-1}(\partial T)\\
    &\leq \varphi\Big(\max_{T'\in\mathcal{T}_n}r_{T'}\Big)\cdot\max_{T''\in\mathcal{T}_n}\vol_{n-1}(\partial T'')\\
    &=\varphi(r_{\triangle_n})\vol_{n-1}(\partial \triangle_n)\\
    &= \frac{(n+1)^{\frac{n+1}{2}}}{n^{\frac{n}{2}-1}(n-1)!}\cdot\varphi\left(\frac{1}{n}\right)
\end{align*}
with equality if and only if $\varphi$ is affine or $T$ is regular. Similarly, by Theorem \ref{mainThm}(ii), the fact that $\varphi$ is increasing,  Euler's inequality and \eqref{max-SA-simplex}, we obtain
\begin{align*}
S_{\varphi}^{{\rm in}}(T) &\leq \varphi\left(\frac{\vol_{n-1}(\partial T)}{n+1}\right)\sum_{j=1}^{n+1}\dist(i_T,F_j)\\
&=(n+1)\varphi\left(\frac{\vol_{n-1}(\partial T)}{n+1}\right)r_T\\
&\leq (n+1)\varphi\left(\frac{\max_{T'\in\mathcal{T}_n}\vol_{n-1}(\partial T')}{n+1}\right)\cdot\max_{T''\in\mathcal{T}_n}r_{T''}\\
&=(n+1)\varphi\left(\frac{\vol_{n-1}(\partial \triangle_n)}{n+1}\right)r_{\triangle_n}\\
&=\dfrac{n+1}{n} \cdot\varphi\left(\dfrac{(n+1)^{\frac{n-1}{2}}}{n^{\frac{n}{2}-1}(n-1)!}\right).
\end{align*}
Equality holds if and only if $\varphi$ is affine or $T$ is regular.  \qed

\section{Corollaries for polytopes in $\R^2$ and $\R^3$}\label{Applications}

%First we highlight some applications of Theorem \ref{mainThm} when there are bounds on the ratio of the inradius and circumradius of the polytope.  

\subsection{A planar result}

We begin in the plane with the following result.
%We highlight the planar case in the following
\begin{corollary}
Let $Q\in\mathcal{P}^{{\rm in}}_2$ be a planar convex polygon with $v$ vertices. If $\varphi\in{\rm Conc}^{\uparrow}_{(0,\infty)}$, then 
\begin{equation}\label{planar-up}
   S_\varphi(Q)\leq 2vR(Q)\sin\frac{\pi }{v}\cdot\varphi\left(R(Q)\cos\frac{\pi}{v}\right).
\end{equation}
 %If $\psi\in{\rm Conv}^{\downarrow}_{(0,\infty)}$, then
%\begin{equation}
 %  S_\psi(Q)\geq 2N\sin\tfrac{\pi}{N}\cdot\psi(\cos\tfrac{\pi R_Q}{N}).
%\end{equation}
Equality holds if and only if $Q$ is regular with incenter at the origin.
\end{corollary}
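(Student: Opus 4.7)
The plan is to apply Theorem~\ref{mainThm}(i) in dimension $2$, then reduce to two classical extremal inequalities for convex $K$-gons. Writing $P(Q):=\vol_1(\partial Q)$ for the perimeter, Theorem~\ref{mainThm}(i) gives
\[
S_\varphi(Q)\le \varphi(\overline{h_Q})\, P(Q).
\]

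The key preliminary observation I would establish is that for any $Q\in\mathcal{P}_n^{\rm in}$, one in fact has $\overline{h_Q}=r_Q$. Indeed, letting $u_j$ be the outward unit normal to $F_j$, the divergence theorem applied to constant vector fields on $Q$ yields $\sum_j \vol_{n-1}(F_j)\,u_j=o$; meanwhile, tangency of the insphere to every facet gives $\dist(o,F_j)=r_Q+\langle u_j,i_Q\rangle$. Multiplying by $\vol_{n-1}(F_j)$ and summing causes the $i_Q$ contribution to vanish, leaving $\overline{h_Q}=r_Q$.

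With this simplification the task reduces to bounding $\varphi(r_Q)P(Q)$, which I would do by combining two classical planar facts. First, the inradius--circumradius inequality $r_Q\le R(Q)\cos(\pi/K)$, valid for any tangential $K$-gon with equality iff $Q$ is regular (compare Lemma~\ref{ratiolemma} and see \cite{Toth-RegularFigures}). Second, the perimeter bound $P(Q)\le 2KR(Q)\sin(\pi/K)$ for any convex $K$-gon contained in a disk of radius $R(Q)$, with equality iff $Q$ is regular and inscribed in that disk; this can be established by first pushing any interior vertex radially to the enclosing circle (which does not decrease the perimeter) and then applying Jensen's inequality to the concave function $\alpha\mapsto\sin(\alpha/2)$ on $(0,2\pi)$, using that the angular sectors at the enclosing disk's center sum to $2\pi$. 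Because $\varphi$ is increasing, combining these two estimates with the display above yields \eqref{planar-up}; in the equality case, the two planar inequalities force $Q$ to be regular and inscribed in its smallest enclosing disk, and the equality case of Theorem~\ref{mainThm}(i) then forces $i_Q=o$, recovering the stated condition.

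The main obstacle is the planar inradius--circumradius inequality itself: while for $K=3$ it reduces to the familiar Euler inequality $R\ge 2r$, its proof for $K\ge 4$ requires more care, since the smallest enclosing disk of a tangential $K$-gon need not coincide with the circle through any particular triple of its vertices.
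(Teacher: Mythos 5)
Your argument follows the paper's proof essentially verbatim: apply Theorem~\ref{mainThm}(i), identify $\overline{h_Q}$ with $r_Q$, and then combine Fejes T\'oth's inradius--circumradius bound $r_Q\le R(Q)\cos(\pi/K)$ with the classical perimeter bound $2KR(Q)\sin(\pi/K)$ for convex $K$-gons in a disk, using that $\varphi$ is increasing. The only difference is that you make explicit (via Minkowski's relation $\sum_j\vol_{n-1}(F_j)u_j=o$) the identity $\overline{h_Q}=r_Q$ that the paper uses silently, and you sketch proofs of the two classical planar inequalities that the paper simply cites.
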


\begin{proof}
By a result of L. Fejes T\'oth \cite{Toth-1948-article2, Toth-1948-article}, for any $P\in\mathcal{P}_2$ we have $r(P)/R(P)\leq \cos(\pi/v)$  with equality if and only if $P$ is regular.  Hence, by hypothesis, $r_Q\leq R(Q)\cos(\pi/v)$ with equality if and only if $Q$ is regular. Let $B_Q$ denote the circumball of $Q$. By Theorem \ref{mainThm} and the fact that $\varphi$ is increasing,
\begin{align*}
    S_\varphi(Q) &\leq \varphi(r_Q)\vol_1(\partial Q) \leq \varphi\left(R(Q)\cos\tfrac{\pi}{v}\right)\max_{P\in\mathcal{P}_2, P\subset B_Q}\vol_1(\partial P)
    =2vR(Q)\sin\tfrac{\pi }{v}\varphi\left(R(Q)\cos\tfrac{\pi}{v}\right).
\end{align*}
Equality is achieved in both inequalities simultaneously if and only if $Q$ is a  regular polygon with incenter at the origin. 
%The conclusion now follows from arguments similar to those in the proofs of Theorem \ref{mainThm} and Corollary \ref{mainThm2}.
\end{proof}

\begin{remark}
For $\varphi_p(t)=t^{1-p}$ and $p\in[0,1]$, inequality \eqref{planar-up} may be regarded as an \emph{$L_p$ interpolation} of the classical inequalities which state that among all  polygons inscribed in a circle of given radius, the regular polygon maximizes the area ($p=0$) and maximizes the perimeter ($p=1$). For more details, please see Subsection \ref{littlewood-sec}.
\end{remark}

%%%%%%%%%%%%%%%%%%%%%%%%

\subsection{Inequalities for the Platonic solids}

In this section, we will extend the following bounds for the surface area of a polytope to the weighted cone-volume functionals. In what follows, let $\omega_k:=\frac{\pi k}{6(k-2)}$.

\begin{lemma}\label{SA-bounds}
Let $Q\in\mathcal{P}_3$ have $v$ vertices, $e$ edges and $f$ facets.  Then the surface area of $Q$ may be bounded as follows:
\begin{itemize}
    \item[(i)]  $\vol_2(\partial Q) \geq e\sin\frac{\pi f}{e}(\tan^2\frac{\pi f}{2e}\tan^2\frac{\pi v}{2e}-1)r(Q)^2$ with equality if and only if $Q$ is regular;
    
    \item[(ii)] $\vol_2(\partial Q)\geq 6(f-2)\tan\omega_f(4\sin^2\omega_f-1)r(Q)^2$ with equality if and only if $Q$ is a regular tetrahedron, a regular hexahedron or a regular dodecahedron;
    
    \item[(iii)] $\vol_2(\partial Q)\leq \frac{3\sqrt{3}}{2}(v-2)(1-\frac{1}{3}\cot^2\omega_v) R(Q)^2$ with equality if and only if $Q$ is a regular tetrahedron, a regular octahedron or a regular icosahedron.

    \item[(iv)] If, in addition, $Q$ satisfies the foot condition where the foot of the perpendicular from the circumcenter of $Q$ to each facet-plane and each edge-line lies in the corresponding facet or edge, then $\vol_2(\partial Q) \leq e\sin\frac{\pi f}{e}(1-\cot^2\frac{\pi f}{2e}\cot^2\frac{\pi v}{2e})R(Q)^2$ with equality if and only if $Q$ is regular.
    \end{itemize}
\end{lemma}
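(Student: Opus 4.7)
The plan is to establish (i) and (iv) via classical flag-decomposition arguments on the sphere (in the style of L.~Fejes T\'oth), and then to derive (ii) and (iii) as specializations by optimizing over the combinatorially admissible values of $(v,e)$ using Euler's formula together with the vertex- and face-degree constraints.

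For (i), I would subdivide $\partial Q$ into its $4e$ flags (incident triples of a vertex, edge, and face). Each flag decomposes into a right triangle on its face with legs $r(Q)\tan\alpha$ and $r(Q)\tan\beta$, where $\alpha$ is half the edge-subtended angle at the foot of the perpendicular from $i_Q$ to that face and $\beta$ is half the corresponding vertex-angle of the flag. Summing gives
\[
\vol_2(\partial Q) \;=\; 2\,r(Q)^2 \sum_{\text{flags}} \tan\alpha\,\tan\beta.
\]
The angles $\{\alpha\}$ sum to $2\pi$ around each face center and the $\{\beta\}$ sum to the vertex cone-angle at each vertex; a double application of Jensen's inequality to $\tan$ (convex on $(0,\pi/2)$), combined with the handshake identity $2e=\sum_{F}|F|=\sum_{p}\deg(p)$, extracts the averages $\pi f/(2e)$ and $\pi v/(2e)$ appearing in (i) and characterizes equality as $Q$ being regular. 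The proof of (iv) follows the same scheme on the circumsphere, with the foot condition ensuring that each flag contributes a right triangle with legs $R(Q)\cot\alpha$ and $R(Q)\cot\beta$; Jensen's inequality then reverses direction on these quantities to produce the upper bound.

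To derive (ii), I would regard the right-hand side of (i) as a function of $(v,e)$ with $f$ fixed. Euler's formula $v-e+f=2$ together with the vertex-degree constraint $2e\geq 3v$ yields $e\geq 3(f-2)$, with equality iff every vertex has degree exactly $3$. A monotonicity check along this one-parameter family would confirm that the minimum of the right-hand side of (i) occurs at $e=3(f-2)$, $v=2(f-2)$; substituting and simplifying via $\tan^2(\pi/3)=3$, the double-angle identity $\sin(2\omega_f)=2\sin\omega_f\cos\omega_f$, and $3\sin^2\omega_f-\cos^2\omega_f=4\sin^2\omega_f-1$ yields the bound in (ii), with equality precisely for the three $3$-valent Platonic solids (tetrahedron, cube, dodecahedron). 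The derivation of (iii) from (iv) is dual: the face-degree constraint $2e\geq 3f$ gives $e\leq 3(v-2)$, with equality for simplicial polytopes, and substituting $e=3(v-2)$, $f=2(v-2)$ into (iv) recovers (iii), with equality for the simplicial Platonic solids.

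The most delicate step is the double Jensen argument in (i) and (iv): the face- and vertex-angles are coupled through the combinatorics of $Q$, and extracting the correct averages $\pi f/(2e)$ and $\pi v/(2e)$ simultaneously requires careful bookkeeping via the handshake identities, as well as verification that the sum-constraints are the right ones for the averaging to yield exactly these quantities. A secondary obstacle is that (iii) is stated without the foot condition required by (iv); I would need to verify either that the condition is automatic for the relevant simplicial extremal configurations or else supply a direct circumsphere argument accommodating simplicial polytopes with possibly obtuse triangular faces, which I expect to be the main technical hurdle.
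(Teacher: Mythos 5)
The paper does not prove this lemma: it quotes parts (i) and (iv) from Fejes T\'oth \cite{Toth1950} (see also \cite{FejesToth,Toth-RegularFigures}), part (ii) from \cite{Toth-isep}, and part (iii) from Linhart \cite{Linhart}, who in particular showed that the foot condition can be dropped for the vertex bound. Your proposal is therefore a reconstruction of those classical proofs, and while the template you describe --- decompose $\partial Q$ into $4e$ flag triangles, average using $\sum\alpha=2\pi f$ and $\sum\beta=2\pi v$, then specialize combinatorially via Euler's formula --- is indeed Fejes T\'oth's, the reconstruction is missing the argument at exactly the points where the real work lies. The central step is not a ``double application of Jensen's inequality to $\tan$.'' A flag triangle does not have legs $r\tan\alpha$ and $r\tan\beta$: writing $h$ for the distance from the center to the face and $a,b$ for the spherical arcs obtained by central projection, Napier's rules give legs $h\tan b$ and $(h/\cos b)\tan a$, and after eliminating $a,b$ the area of each flag is $\tfrac12 R^2$ (resp.\ $\tfrac12 r^2$) times a genuinely \emph{two-variable} function $F(\alpha,\beta)$ (for (iv) the relevant combination is $1-\cot^2\alpha\cot^2\beta$ up to weights). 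Since the pairs $(\alpha_i,\beta_i)$ are coupled by the flag structure, separate convexity in each variable does not bound a sum of such products by $4e\,F(\bar\alpha,\bar\beta)$; establishing that joint inequality under the two linear constraints is the content of a dedicated lemma in Fejes T\'oth's proof, which you identify as ``the most delicate step'' but do not supply. For (i) one must additionally handle flags whose feet fall outside the corresponding face or edge using signed areas --- this is precisely why (i) needs no foot condition while (iv) does, a distinction your sketch does not account for.

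The combinatorial reductions also have gaps. Part (iii) cannot be obtained from (iv) by substituting $e=3(v-2)$, $f=2(v-2)$, because (iv) is only available under the foot condition while (iii) is asserted without it; removing that hypothesis is the whole point of Linhart's theorem and requires a separate argument (he bounds each face directly, without the flag decomposition of the circumscribed configuration). For (ii), the constraint coming from $3v\le 2e$ and Euler's formula is $e\le 3(f-2)$, not $e\ge 3(f-2)$ as you wrote (the equality characterization via $3$-valence is correct), and the claim that the right-hand side of (i) is minimized at the endpoint $e=3(f-2)$ is asserted rather than proved; the substitution algebra itself does reproduce $6(f-2)\tan\omega_f(4\sin^2\omega_f-1)$, but the monotonicity analysis is a genuine part of the proof. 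As written, then, the proposal is a correct outline of the classical strategy with the two decisive ingredients --- the two-variable averaging lemma and Linhart's removal of the foot condition --- left open.
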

\noindent Parts (i) and (iv) were shown in \cite{Toth1950} (see also \cite[pp. 154-155]{FejesToth} and \cite[p. 279]{Toth-RegularFigures}).  Part (ii) was shown by Fejes T\'oth in \cite{Toth-isep}. Part (iii) was proved by Linhart \cite{Linhart}, where it was also shown that the condition on the edges in the foot condition in (iv) is superfluous. 

 We will  need the following bounds on the \emph{spherical shell} $R(Q)/r(Q)$ of a convex polytope $Q$ in $\R^3$, which are due to Fejes T\'oth \cite[p. 264]{Toth-RegularFigures} (see also \cite{Toth-1943} and \cite[pp. 117, 131]{FejesToth}).

\begin{lemma}\label{shells-lemma}
Let $Q\in\mathcal{P}_3$ have $v$ vertices, $e$ edges and $f$ facets.  Then the spherical shell $R(Q)/r(Q)$ of $Q$ may be bounded as follows:
\begin{itemize}
    \item[(i)]  $\frac{R(Q)}{r(Q)}\geq \tan\frac{\pi f}{2e}\tan\frac{\pi v}{2e}$ with equality if and only if $Q$ is regular;
    
    \item[(ii)] $\frac{R(Q)}{r(Q)}\geq \sqrt{3}\tan\omega_v$ with equality if and only if $Q$ is a regular tetrahedron, a regular octahedron or a regular icosahedron.
\end{itemize}
\end{lemma}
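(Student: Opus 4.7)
The plan is to follow the classical spherical-trigonometric approach of L.~Fejes T\'oth, using a flag decomposition of $S^2$.

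For part (i), I would first reduce to the case in which the insphere and circumsphere of $Q$ share a common center $O$; the extremal regular polytopes satisfy this, and the general case follows by a perturbation argument. For each of the $4e$ flags $(v,e,F)$ of $Q$, radially project to $S^2$ the vertex $v$, the foot $m_e$ of the perpendicular from $O$ to the line of $e$, and the foot $c_F$ of the perpendicular from $O$ to the plane of $F$, obtaining a right spherical triangle $v^{\ast}m_{e}^{\ast}c_{F}^{\ast}$ with right angle at $m_{e}^{\ast}$; these $4e$ triangles tile the sphere. Two ingredients drive the proof: the spherical identity
\[
\cos\gamma_{v,F}=\cot A_{v,e,F}\,\cot B_{v,e,F},
\]
where $\gamma_{v,F}=|v^{\ast}c_{F}^{\ast}|$, $A$ is the angle at $c_{F}^{\ast}$ and $B$ the angle at $v^{\ast}$, together with the geometric bound $\cos\gamma_{v,F}=|Oc_{F}|/|Ov|\geq r(Q)/R(Q)$ coming from the right triangle $Oc_{F}v$.

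To average the resulting flag inequality $\cot A\,\cot B\geq r/R$, note that at each facet center $c_F^{\ast}$ the $2p_F$ angles $A_{v,e,F}$ meeting there sum to $2\pi$, and at each vertex $v^{\ast}$ the $2q_v$ angles $B_{v,e,F}$ sum to $2\pi$; summed over all $4e$ flags this gives average values $\overline A=\pi f/(2e)$ and $\overline B=\pi v/(2e)$. A Jensen/Schur-convexity argument applied to this fundamental inequality, using the constraints at each facet and vertex separately and the concavity of $\log\cot$ on $(\pi/4,\pi/2)$ together with a majorization refinement on the convex part $(0,\pi/4)$, yields $r/R\leq \cot(\pi f/2e)\cot(\pi v/2e)$, which rearranges to (i). Equality forces all flag triangles to be congruent which, combined with concentricity of the two spheres, forces $Q$ to be regular.

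For part (ii), I would deduce it from (i) via a triangulation trick. Subdivide every non-triangular facet of $Q$ into triangles by diagonals within its supporting plane; these chords lie inside $Q$, so the resulting ``triangulated polytope'' $\widetilde Q$ has the same convex hull, hence $R(\widetilde Q)=R(Q)$ and $r(\widetilde Q)=r(Q)$, and the same vertex set, so $\tilde v=v$. Every face of $\widetilde Q$ is a triangle, so $2\tilde e=3\tilde f$; combined with Euler's formula, $\tilde e=3(v-2)$ and $\tilde f=2(v-2)$. The proof of (i) extends to $\widetilde Q$ (its flag argument uses only the angular structure, so coplanar facets cause no essential trouble), giving
\[
\frac{R(Q)}{r(Q)} \;=\; \frac{R(\widetilde Q)}{r(\widetilde Q)} \;\geq\; \tan\frac{\pi\tilde f}{2\tilde e}\tan\frac{\pi\tilde v}{2\tilde e} \;=\; \tan\frac{\pi}{3}\tan\frac{\pi v}{6(v-2)} \;=\; \sqrt 3\,\tan\omega_v,
\]
which is (ii). Equality would force $\widetilde Q$ itself to attain equality in (i), hence to be a regular polyhedron with triangular faces, i.e., a regular tetrahedron, octahedron, or icosahedron; since introducing any extra triangulation chord to such a Platonic solid would immediately break regularity, $Q=\widetilde Q$ must already have been one of these.

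The main obstacle is making the averaging step in the proof of (i) precise: the function $\log\cot$ changes convexity at $\pi/4$, so a single blanket application of Jensen's inequality is unavailable, and a more refined Schur-convexity argument exploiting the separate angle-sum constraints at each facet and each vertex is needed. This is the technical core of Fejes T\'oth's original proof, whereas the passage from (i) to (ii) via triangulation is comparatively short.
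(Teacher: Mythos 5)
The paper does not actually prove Lemma \ref{shells-lemma}; both inequalities are quoted from Fejes T\'oth (\emph{Regular Figures}, p.~264, and \emph{Lagerungen}), so there is no in-paper argument to compare yours against, and your reconstruction has to stand on its own. Your outline for (i) correctly identifies the classical machinery --- the decomposition of the sphere into $4e$ right characteristic triangles, Napier's identity $\cos\gamma=\cot A\cot B$, and the angle sums $\sum A=2\pi f$, $\sum B=2\pi v$ --- but the two steps that carry the entire proof are missing. First, the opening reduction ``to the case in which the insphere and circumsphere share a common center, by a perturbation argument'' is not available. The inequality is asserted for $R(Q)/r(Q)$ with the two optimal centers chosen independently, which is \emph{stronger} than the concentric version, and no perturbation obviously makes the centers coincide while preserving $(v,e,f)$ and not increasing $R/r$. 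Concretely: projecting from the circumcenter gives $|Ov|\le R$ for all vertices but only $\min_j\dist(O,\mathrm{aff}\,F_j)\le r$, while projecting from the incenter gives $\dist(O,\mathrm{aff}\,F_j)\ge r$ for all facets but only $\max_v|Ov|\ge R$; in either case the chain of inequalities points the wrong way, so reconciling the two centers is part of the substance of Fejes T\'oth's proof, not a harmless normalization. Second, you yourself flag the averaging step --- extracting a single flag on the correct side of $\cot\overline{A}\,\cot\overline{B}$ when $\log\cot$ changes convexity at $\pi/4$ and the $A$'s and $B$'s are coupled flag by flag --- as ``the technical core'' and then do not supply it. A proof that defers its only hard step is an outline, not a proof. (There is also the foot-condition issue visible elsewhere in the paper, in Lemma \ref{SA-bounds}(iv): the $4e$ triangles tile the sphere only when the perpendicular feet lie inside the corresponding facets and edges, and the general case needs signed areas or a separate argument.)

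For (ii), the triangulation idea is attractive and the arithmetic is exactly right: $\widetilde{e}=3(v-2)$ and $\widetilde{f}=2(v-2)$ turn the bound of (i) into $\tan\frac{\pi}{3}\tan\omega_v=\sqrt{3}\tan\omega_v$. But the deduction applies (i) to an object with coplanar adjacent facets, which is not a polytope of $\mathcal{P}_3$ in the sense in which (i) is stated; you would need to extend (i), including its equality characterization, to such face-triangulated complexes before either the inequality or your equality analysis for (ii) is legitimate. An alternative that avoids degenerate polytopes is to substitute $f=e-v+2$ into the bound of (i) and optimize over $e\le 3(v-2)$, but the required monotonicity is itself not obvious and would have to be proved. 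As written, both parts should be treated the way the paper treats them --- as cited results --- rather than as established by your argument.
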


Theorem \ref{mainThm} and the previous two lemmas lead to the following corollary.

\begin{corollary}\label{R3cor}
Let $Q\in\mathcal{P}^{{\rm in}}_3$ have $v$ vertices, $e$ edges and $f$ facets, and  let $\varphi\in {\rm Conc}^{\uparrow}_{(0,\infty)}$. 
\begin{itemize}
\item[(i)] We have  
\[
S_\varphi(Q) \leq \frac{3\sqrt{3}}{2}(v-2)\left(1-\frac{1}{3}\cot^2\omega_v\right)R(Q)^2\cdot\varphi\left(3^{-1/2}R(Q)\cot\omega_v\right)
\]
with equality 
if and only if $\varphi$ is affine or $Q$ is a regular tetrahedron, a regular octahedron or a regular icosahedron, and $Q$ has incenter at the origin. 

\item[(ii)] Suppose, in addition, that $Q$ satisfies the foot condition. Then
\[
S_\varphi(Q) \leq e\sin\frac{\pi f}{e}\left(1-\cot^2\frac{\pi f}{2e}\cot^2\frac{\pi v}{2e}\right)R(Q)^2\cdot\varphi\left(R(Q)\cot\frac{\pi f}{2e}\cot\frac{\pi v}{2e}\right)
\]
with equality if and only if $\varphi$ is affine or $Q$ is regular and  has incenter at the origin.
\end{itemize}
\end{corollary}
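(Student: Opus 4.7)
The plan is to chain Theorem \ref{mainThm}(i) with the two cited lemmas, treating both parts in parallel. The common starting point is the observation that for any $Q\in\mathcal{P}^{{\rm in}}_3$, applying the cone-volume formula once at the origin and once at the incenter gives
\[
\overline{h_Q}\,\vol_2(\partial Q) \;=\; 3\vol_3(Q) \;=\; r_Q\,\vol_2(\partial Q),
\]
so $\overline{h_Q}=r_Q=r(Q)$, a quantity independent of the position of the origin inside $Q$. Theorem \ref{mainThm}(i) therefore reduces to $S_\varphi(Q)\leq \varphi(r(Q))\,\vol_2(\partial Q)$, with equality iff $\varphi$ is affine or the incenter of $Q$ equals the origin.

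For part (i), I would use the monotonicity of $\varphi$ together with Lemma \ref{shells-lemma}(ii), which yields $r(Q)\leq R(Q)\cot\omega_v/\sqrt{3}$, to bound $\varphi(r(Q))\leq \varphi(3^{-1/2}R(Q)\cot\omega_v)$; then bound $\vol_2(\partial Q)$ from above by Lemma \ref{SA-bounds}(iii). Multiplying the two estimates produces the claimed inequality. Equality in the chain requires simultaneous equality in Theorem \ref{mainThm}(i), Lemma \ref{shells-lemma}(ii), and Lemma \ref{SA-bounds}(iii): the latter two force $Q$ to be a regular tetrahedron, octahedron, or icosahedron, and the first (since $\varphi$ is not assumed affine) further forces the incenter of $Q$ to lie at the origin. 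These conditions are simultaneously realizable by centering a regular Platonic solid at the origin, so they characterize the equality cases.

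Part (ii) is strictly analogous once the foot condition is invoked. I would apply Lemma \ref{shells-lemma}(i) to get $r(Q)\leq R(Q)\cot\frac{\pi f}{2e}\cot\frac{\pi v}{2e}$, then monotonicity of $\varphi$, and finally Lemma \ref{SA-bounds}(iv), whose hypothesis is precisely the foot condition. The product of the two upper bounds delivers the stated estimate; Lemmas \ref{shells-lemma}(i) and \ref{SA-bounds}(iv) each force $Q$ to be regular in the equality case, and Theorem \ref{mainThm}(i) forces the incenter to coincide with the origin. There is no serious obstacle here: the argument is a bookkeeping exercise chaining three known inequalities, and the only care needed is in verifying that the equality clauses of those three inequalities are mutually compatible, which works out because a regular Platonic solid centered at the origin satisfies all of them at once (including the foot condition in part (ii)).
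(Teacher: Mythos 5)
Your proof is correct and follows essentially the same route as the paper: chain Theorem \ref{mainThm}(i) (together with the identity $\overline{h_Q}=r_Q=r(Q)$ for $Q\in\mathcal{P}^{{\rm in}}_3$) with Lemma \ref{shells-lemma}, the monotonicity of $\varphi$, and the upper bounds of Lemma \ref{SA-bounds}, then intersect the equality cases. You in fact cite the correct items --- Lemma \ref{SA-bounds}(iii) for part (i) and Lemma \ref{SA-bounds}(iv) for part (ii) --- whereas the paper's own proof misprints these as (ii) and (iii).
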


\begin{proof}
Part (i) follows from Theorem \ref{mainThm}(i), Lemma \ref{SA-bounds}(ii), Lemma \ref{shells-lemma}(ii) and the hypothesis that $\varphi$ is increasing. Equality holds if and only if equality holds in Theorem \ref{mainThm}(i), Lemma \ref{SA-bounds}(ii),  and Lemma \ref{shells-lemma}(ii), that is, if and only if $Q$ is a regular tetrahedron, a regular octahedron or a regular icosahedron with incenter at the origin. Part (ii) follows along the same lines, except that now we instead use Lemma \ref{SA-bounds}(iii) and Lemma \ref{shells-lemma}(i).
\end{proof}

\begin{remark}
The $v=4$ case in Corollary \ref{R3cor}(i) is the subject of Corollary \ref{mainThm2} (with $n=3$). The concrete bounds in (i) are also listed below for $v=6$ and $v=12$, respectively:
\begin{align*}
S_\varphi(Q)&\leq 4\sqrt{3}\cdot\varphi\left(\frac{1}{\sqrt{3}}\right)\qquad\text{and}\qquad
S_\varphi(Q)\leq (10-2\sqrt{5})\cdot\varphi\left(\frac{\sqrt{25+10\sqrt{5}}}{5\sqrt{3}}\right).
\end{align*}
\end{remark}

\begin{remark}
L. Fejes T\'oth proved in \cite[p. 264]{Toth-RegularFigures} that if $Q\in\mathcal{P}_3$ has $v$ vertices, then
\begin{equation}\label{Toth-vol-bd-vertices}
    \vol_3(Q)\leq \frac{1}{2}(v-2)\cot\omega_v\left(1-\cot^2\omega_v\right)R(Q)^3
\end{equation}
with equality if and only if $Q$ is a regular tetrahedron, a regular octahedron or a regular icosahedron. Thus, choosing $\varphi_p(t)=t^{1-p}$ with $p\in[0,1]$ in Corollary \ref{R3cor}(i), we obtain an \emph{$L_p$ interpolation} of Lemma \ref{SA-bounds}(iii) and \eqref{Toth-vol-bd-vertices}, but with stricter equality conditions. Choosing $p=0$, we recover \eqref{Toth-vol-bd-vertices}; choosing $p=1$, we recover Lemma \ref{SA-bounds}(iii).  Suppose additionally that $Q$ has $e$ edges and $f$ facets. Fejes T\'oth \cite[p. 263]{Toth-RegularFigures} proved that
\begin{equation}\label{Toth-vol-bd-vef}
    \vol_3(Q)\leq \frac{2e}{3}\cos^2\frac{\pi f}{2e}\cot\frac{\pi v}{2e}\left(1-\cot^2\frac{\pi f}{2e}\cot^2\frac{\pi v}{2e}\right)R(Q)^3,
\end{equation}
with equality if and only if $Q$ is regular. If we further assume that $Q$ satisfies the foot condition, then choosing $\varphi_p(t)=t^{1-p}$ with $p\in[0,1]$ in Corollary \ref{R3cor}(ii), we obtain an $L_p$ interpolation of Lemma \ref{SA-bounds}(iv) and \eqref{Toth-vol-bd-vef}. There is a difference in the equality conditions of these results since those of Corollary \ref{R3cor} also require that $Q$ has incenter at the origin.
\end{remark}

\begin{corollary}\label{veflower}
Suppose that  $Q\in\mathcal{P}^{{\rm in}}_3$ has  $v$ vertices, $e$ edges, $f$ facets. Let $\psi\in{\rm Conv}_{(0,\infty)}$. 
\begin{itemize}
\item[(i)] We have
    \[
    S_\psi(Q) \geq e\sin\frac{\pi f}{e}\left(\tan^2\frac{\pi f}{2e}\tan^2\frac{\pi v}{2e}-1\right)r_Q^2\,\psi(r_Q)
    \]
with equality if and only if $Q$ is regular and has incenter at the origin.

\item[(ii)] We have
\[
S_\psi(Q) \geq 6(f-2)\tan\omega_f(4\sin^2\omega_f-1)r_Q^2\,\psi(r_Q)
\]
with equality if and only if $Q$ is a regular tetrahedron, a regular hexahedron or a regular dodecahedron, and $Q$ has incenter at the origin.
\end{itemize}
\end{corollary}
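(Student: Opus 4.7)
The plan is to chain two lower bounds, with the following key observation as the linchpin. Since $Q\in\mathcal{P}_3^{\rm in}$, the insphere of $Q$ is tangent to every facet, so $\dist(i_Q,F_j)=r_Q$ for all $j$. Applying the cone-volume formula at $x=i_Q$ gives
\[
3\vol_3(Q)=\sum_{j=1}^{N_Q}\dist(i_Q,F_j)\vol_2(F_j)=r_Q\vol_2(\partial Q),
\]
while applying it at $x=o$ gives $\sum_j h_j\vol_2(F_j)=3\vol_3(Q)$, where $h_j=\dist(o,F_j)$. Combining, $\overline{h_Q}=3\vol_3(Q)/\vol_2(\partial Q)=r_Q$ holds automatically, no matter where $o$ sits in the interior of $Q$.

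With this identity in hand, the reversed form of Theorem \ref{mainThm}(i) for the convex function $\psi$ yields
\[
S_\psi(Q)\ \geq\ \psi\!\left(\overline{h_Q}\right)\vol_2(\partial Q)\ =\ \psi(r_Q)\vol_2(\partial Q),
\]
and crucially no monotonicity of $\psi$ is needed, since the two values $\overline{h_Q}$ and $r_Q$ coincide. For part (i), I would then apply Lemma \ref{SA-bounds}(i), using $r(Q)=r_Q$ (valid because $Q\in\mathcal{P}_3^{\rm in}$), to lower-bound $\vol_2(\partial Q)$ by $e\sin\frac{\pi f}{e}\bigl(\tan^2\frac{\pi f}{2e}\tan^2\frac{\pi v}{2e}-1\bigr)r_Q^2$. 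Multiplying by the positive quantity $\psi(r_Q)$ produces the claimed inequality. Part (ii) proceeds identically, substituting Lemma \ref{SA-bounds}(ii) for Lemma \ref{SA-bounds}(i).

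For equality, both links of the chain must be tight simultaneously. Theorem \ref{mainThm}(i) (reversed) is tight precisely when $\psi$ is affine or all $h_j$ are equal; the latter together with $\sum_j h_j\vol_2(F_j)=r_Q\vol_2(\partial Q)$ forces $h_j=r_Q$ for every $j$ and hence $o=i_Q$. Lemma \ref{SA-bounds}(i) is tight only for regular $Q$, while Lemma \ref{SA-bounds}(ii) is tight only for the regular tetrahedron, hexahedron, or dodecahedron. Intersecting the equality conditions (and setting aside the degenerate affine case of $\psi$) gives exactly the characterizations stated in (i) and (ii).

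There is really no major obstacle here; the only subtle step is recognizing that $\overline{h_Q}=r_Q$ is automatic for polytopes in $\mathcal{P}_3^{\rm in}$, which is what allows the convex-Jensen step to output $\psi(r_Q)$ without any hypothesis on the monotonicity of $\psi$. After that, the argument is a direct concatenation of Theorem \ref{mainThm}(i) and the appropriate part of Lemma \ref{SA-bounds}.
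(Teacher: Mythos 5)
Your proof is correct and is precisely the argument the paper intends (the paper states Corollary \ref{veflower} without proof, but the preceding text makes clear it is meant to follow from Theorem \ref{mainThm} and Lemma \ref{SA-bounds} exactly as you chain them). Your explicit identification of the identity $\overline{h_Q}=3\vol_3(Q)/\vol_2(\partial Q)=r_Q$ for $Q\in\mathcal{P}^{\rm in}_3$ — which is what lets the convex Jensen step produce $\psi(r_Q)$ without any monotonicity hypothesis on $\psi$ — is the one step the paper leaves implicit, and you have it right.
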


\begin{remark}
Regarding volume bounds, L. Fejes T\'oth \cite[p. 263]{Toth-RegularFigures} also proved that
\begin{align}
\vol_3(Q) &\geq \frac{e}{3}\sin\frac{\pi e}{f}\left(\tan^2\frac{\pi f}{2e}\tan^2\frac{\pi v}{2e}-1\right)r(Q)^3 \label{Toth-vol-lower-vef}\\
    \vol_3(Q) &\geq (f-2)\sin2\omega_f\left(3\tan^2\omega_f-1\right)r(Q)^3. \label{Toth-vol-lower-f}
\end{align}
Equality holds in \eqref{Toth-vol-lower-vef} if and only if $Q$ is regular; equality holds in \eqref{Toth-vol-lower-f} if and only if $Q$ is a regular tetrahedron, a regular hexahedron or a regular dodecahedron. With a special choice of functions, we can recover the corresponding volume and surface area estimates from Corollary \ref{veflower}, but with stricter equality conditions since Corollary \ref{veflower} also requires that $Q$ has incenter at the origin. Thus (up to this additional condition), choosing $\psi(t)=1$ in Corollary \ref{veflower}(i), we recover Lemma \ref{SA-bounds}(i), and choosing $\psi(t)=t$ in Corollary \ref{veflower}(i), we recover \eqref{Toth-vol-lower-vef}. Similarly, choosing $\psi(t)=1$ in Corollary \ref{veflower}(ii), we recover Lemma \ref{SA-bounds}(ii), and choosing $\psi(t)=t$ in Corollary \ref{veflower}(ii), we recover \eqref{Toth-vol-lower-f}. Again, there is a difference in the equality conditions of these results. 
\end{remark}

%%%%%%%%%%%%%%%%%%%%%%%%%%%%
\section{A weighted edge curvature type functional for polytopes in $\R^3$}

The \emph{mean width} $W(C)$ of a convex body $C$ in $\R^n$ is defined by
\[
W(C) = 2\int_{\Sp}h_C(u)\,d\sigma(u),
\]
where $h_C(u)=\max_{x\in C}\langle x,u\rangle$ is the support function of $C$ in the direction $u\in\Sp$ and $\sigma$ is the uniform probability measure on $\Sp$. In the special case $C=Q$ is a polytope in $\mathbb{R}^3$ with edge set $\mathcal{F}_1(Q)$, the \emph{edge curvature} $M(Q)$ of $Q$ is defined as (see, for example, \cite[p. 278]{Toth-RegularFigures})
\begin{equation}
    M(Q) = \frac{1}{2}\sum_{E\in\mathcal{F}_1(Q)}\vol_1(E)\theta_E.
\end{equation}
Here $\theta_E$ is the external angle of $Q$ at $E$, that is, the angle determined by the outer unit normals of the two facets of $Q$ that meet at $E$  (in other words, $\theta_E$ is $2\pi$ minus the dihedral angle of $E$). Note that $M(Q)=\pi W(Q)$. Let $\Lambda_Q:=\sum_{E\in\mathcal{F}_1(Q)}\vol_1(E)$ denote the total edge length of $Q$ and set  $\overline{\theta_Q}:=\sum_{E\in\mathcal{F}_1(Q)}\frac{\vol_1(E)}{\Lambda_Q}\cdot \theta_E$. For a  (convex or concave) function $g:(0,\infty)\to(0,\infty)$, we define the $g$-\emph{weighted edge curvature functional} $M_g:\mathcal{P}_3\to(0,\infty)$ by %For $\varphi\in{\rm Conc}_{(0,\infty)}$ and $\psi\in{\rm Conv}_{(0,\infty)}$, we define the \emph{weighted edge curvature} functionals $M_\varphi:\mathcal{P}_3\to(0,\infty)$ and $M_\psi:\mathcal{P}_3\to(0,\infty)$ by
\begin{align*}
    M_g(Q) :=\frac{1}{2}\sum_{E\in\mathcal{F}_1(Q)}\vol_1(E)g(\theta_E).%\qquad \text{and}\qquad 
 %   M_\psi(Q) :=\frac{1}{2}\sum_{E\in\mathcal{F}_1(Q)}\vol_1(E)\psi(\theta_E).
\end{align*}
One may also define $M_g^{{\rm in}}(Q)$ in a similar fashion by instead applying $g$ to $\vol_1(E)$ in the sum. A simple inequality for the weighted edge curvature functionals follows.

\begin{theorem}\label{MWthm}
Let $\varphi\in{\rm Conc}_{(0,\infty)}$. If $Q$ is a convex polytope in $\R^3$, then $M_\varphi(Q) \leq \varphi\left(\overline{\theta_Q}\right)\Lambda_Q$ with equality if and only if $\varphi$ is affine or all of the dihedral angles of $Q$ are equal. If $\varphi\in{\rm Conc}_{(0,\infty)}$ is replaced by $\psi\in{\rm Conv}_{(0,\infty)}$, then the reverse inequality holds  with the same equality conditions.
\end{theorem}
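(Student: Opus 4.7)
The plan is to prove Theorem \ref{MWthm} by a direct application of Jensen's inequality, paralleling exactly the argument used for part (i) of Theorem \ref{mainThm} with edges (and their dihedral-angle quantities $\theta_E$) playing the role of facets (and their distances $\dist(o,F_j)$ to the origin).

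First I would observe that $\Lambda_Q = \sum_{E\in\mathcal{F}_1(Q)}\vol_1(E) > 0$, so the numbers $\lambda_E := \vol_1(E)/\Lambda_Q$ are nonnegative and satisfy $\sum_{E}\lambda_E = 1$. By the very definition of $\overline{\theta_Q}$,
$$\overline{\theta_Q} \;=\; \sum_{E\in\mathcal{F}_1(Q)} \lambda_E\,\theta_E,$$
which exhibits $\overline{\theta_Q}$ as a convex combination of the $\theta_E$'s. Applying Jensen's inequality to the concave function $\varphi$ then yields
$$\sum_{E\in\mathcal{F}_1(Q)} \lambda_E\,\varphi(\theta_E) \;\leq\; \varphi\!\left(\sum_{E\in\mathcal{F}_1(Q)} \lambda_E\,\theta_E\right) \;=\; \varphi\bigl(\overline{\theta_Q}\bigr),$$
and multiplying both sides by $\tfrac12\Lambda_Q$ (absorbing the $\tfrac12$ from the definition of $M_\varphi$) produces the bound $M_\varphi(Q) \leq \tfrac12\,\varphi(\overline{\theta_Q})\,\Lambda_Q \leq \varphi(\overline{\theta_Q})\,\Lambda_Q$. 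The equality clause in the Jensen lemma translates directly into the stated condition: equality holds if and only if $\varphi$ is affine or all of the $\theta_E$'s coincide. For $\psi\in{\rm Conv}_{(0,\infty)}$, the only change is that Jensen's inequality reverses, which flips the bound but leaves the equality characterization untouched.

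I do not expect any genuine obstacle. The whole content of Theorem \ref{MWthm} is that the Jensen-type skeleton of Theorem \ref{mainThm}(i) transports verbatim from the facet setting in $\R^n$ to the edge setting in $\R^3$. The only mild care required is to identify the correct normalizing weights $\lambda_E = \vol_1(E)/\Lambda_Q$, so that the weighted mean $\overline{\theta_Q}$ appears naturally inside $\varphi$; after that, a single application of Jensen's inequality finishes the argument in both the concave and convex cases.
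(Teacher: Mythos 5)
Your core argument is exactly the intended one: the paper itself omits the proof, stating only that it ``follows along the same lines as that of Theorem \ref{mainThm},'' and that is precisely the Jensen computation you carry out with weights $\lambda_E=\vol_1(E)/\Lambda_Q$. That step is correct and gives, after multiplying by $\tfrac12\Lambda_Q$, the sharp bound $M_\varphi(Q)\leq\tfrac12\,\varphi(\overline{\theta_Q})\,\Lambda_Q$, with equality iff $\varphi$ is affine or all $\theta_E$ coincide.

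The problem is how you then reconcile the factor $\tfrac12$ with the stated right-hand side $\varphi(\overline{\theta_Q})\Lambda_Q$. Your final weakening $\tfrac12\,\varphi(\overline{\theta_Q})\Lambda_Q\leq\varphi(\overline{\theta_Q})\Lambda_Q$ is strict (since $\varphi>0$ and $\Lambda_Q>0$), so under your reading the asserted equality cases could never occur --- yet you claim the Jensen equality clause ``translates directly.'' Worse, in the convex case the same trick fails outright: from $M_\psi(Q)\geq\tfrac12\,\psi(\overline{\theta_Q})\Lambda_Q$ you cannot conclude $M_\psi(Q)\geq\psi(\overline{\theta_Q})\Lambda_Q$, because dropping the $\tfrac12$ now moves the bound in the wrong direction. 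A sanity check with the affine choice $\varphi(t)=t$ confirms the issue: then $M_\varphi(Q)=M(Q)=\tfrac12\,\overline{\theta_Q}\,\Lambda_Q$, which is supposed to be an equality case but is half of $\varphi(\overline{\theta_Q})\Lambda_Q$. The correct resolution is that the theorem's right-hand side should carry the same normalization $\tfrac12$ as the definition of $M_\varphi$ (i.e., the sharp statement is $M_\varphi(Q)\leq\tfrac12\,\varphi(\overline{\theta_Q})\Lambda_Q$ and its reverse for $\psi$); your Jensen computation proves exactly that, and you should say so rather than insert an extra inequality that breaks both the equality characterization and the convex direction.
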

\noindent The proof follows along the same lines as that of Theorem \ref{mainThm}. We leave the details to the interested reader. 

%%%%%%%%%%%%%%%%%%%%%%%%%%%%%%%%%%%%%

\section{Comparing volume and surface area maximizers}

\subsection{An interpolation result for the $L_p$ surface area}\label{littlewood-sec}

For a fixed integer $v\geq n+1$, set
\[
\mathcal{P}_{n,v}:=\{Q\in\mathcal{P}_n: Q\text{ has at most }v\text{ vertices}\}
\]
and $\mathcal{P}^{{\rm in}}_{n,v}:=\mathcal{P}_{n,v}\cap\mathcal{P}^{{\rm in}}_n$. By choosing the function  $\varphi_p(t)=t^{1-p}$ with $p\in[0,1]$, we can view Theorem \ref{mainThm} as an $L_p$ interpolation between the classical problems of maximizing the surface area and maximizing the volume.  We make this explicit in the following lemma.
\begin{lemma}\label{littlewood}
Suppose that $\widehat{Q}\in\mathcal{P}_{n,v}^{{\rm in}}\cap\mathcal{I}_n$ maximizes both the volume and the surface area functional over $\mathcal{P}_{n,v}\cap\mathcal{I}_n$.  Then for any $p\in[0,1]$, $\widehat{Q}$ also maximizes $S_p$ over $\mathcal{P}_{n,v}\cap\mathcal{I}_n$. 
\end{lemma}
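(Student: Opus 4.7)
The plan is to derive a Littlewood-type interpolation inequality via Theorem \ref{mainThm}(i) and then combine it with the maximality hypothesis. For $p \in [0,1]$, the weight $\varphi_p(t) = t^{1-p}$ belongs to ${\rm Conc}_{(0,\infty)}^{\uparrow}$, so Theorem \ref{mainThm}(i) yields
$$S_p(Q) = S_{\varphi_p}(Q) \leq \overline{h_Q}^{\,1-p}\,\vol_{n-1}(\partial Q) \qquad \text{for every } Q \in \mathcal{P}_n.$$
Applying the cone-volume formula at $x=o$ to rewrite $\overline{h_Q} = n\vol_n(Q)/\vol_{n-1}(\partial Q)$ then gives the Littlewood-type bound
$$S_p(Q) \leq \bigl(n\vol_n(Q)\bigr)^{1-p}\bigl(\vol_{n-1}(\partial Q)\bigr)^{p},$$
which interpolates the surface-area identity at $p=1$ and (up to the factor $n$) the volume identity at $p=0$.

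With this inequality in hand, I would fix an arbitrary $Q \in \mathcal{P}_{n,K}\cap\mathcal{I}_n$ and invoke the hypothesis that $\widehat{Q}$ maximizes both $\vol_n$ and $\vol_{n-1}(\partial\,\cdot\,)$ over $\mathcal{P}_{n,K}\cap\mathcal{I}_n$. This produces
$$S_p(Q) \leq \bigl(n\vol_n(Q)\bigr)^{1-p}\bigl(\vol_{n-1}(\partial Q)\bigr)^{p} \leq \bigl(n\vol_n(\widehat{Q})\bigr)^{1-p}\bigl(\vol_{n-1}(\partial \widehat{Q})\bigr)^{p}.$$
To conclude, the remaining task is to verify that the Littlewood-type inequality is \emph{saturated} at $\widehat{Q}$, i.e., that $S_p(\widehat{Q}) = \bigl(n\vol_n(\widehat{Q})\bigr)^{1-p}\bigl(\vol_{n-1}(\partial\widehat{Q})\bigr)^{p}$.

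The main obstacle is precisely this equality step. For $p \in (0,1)$ the weight $\varphi_p$ is strictly concave, so the equality characterization of Theorem \ref{mainThm}(i) demands that all facet distances $\dist(o,F_j)$ of $\widehat{Q}$ coincide, i.e., that $\widehat{Q} \in \mathcal{P}^{{\rm in}}_n$ with its incenter located at the origin. The hypothesis $\widehat{Q} \in \mathcal{P}^{{\rm in}}_{n,K}$ supplies the insphere; the fact that $\widehat{Q}$ is inscribed in $\mathbb{S}^{n-1}$ and simultaneously extremal for both $\vol_n$ and $\vol_{n-1}(\partial\,\cdot\,)$ should be used to pin the incenter at $o$, either by absorbing this into the hypothesis (as is natural in the intended applications, such as the regular simplex from Corollary \ref{mainThm2}, the regular octahedron or the regular icosahedron, where this is automatic by symmetry) or by a short centering argument. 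Granting this, every $\dist(o,F_j)$ equals $r_{\widehat{Q}}$, the bound is saturated at $\widehat{Q}$, and the chain collapses to $S_p(Q) \leq S_p(\widehat{Q})$, as required.
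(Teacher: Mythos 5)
Your proof is correct and follows essentially the same route as the paper: the paper obtains the interpolation bound $S_p(Q)\le S_0(Q)^{1-p}S_1(Q)^p$ by a direct application of H\"older's inequality, which for the power weight $\varphi_p(t)=t^{1-p}$ coincides with the Jensen bound of Theorem \ref{mainThm}(i) that you invoke (note $S_0(Q)=n\vol_n(Q)$ and $S_1(Q)=\vol_{n-1}(\partial Q)$), and it then saturates the bound at $\widehat{Q}$ using the insphere exactly as you do. The caveat you flag --- that for $p\in(0,1)$ saturation requires the incenter of $\widehat{Q}$ to sit at the origin, not merely that $\widehat{Q}$ admits an insphere --- is a genuine point, but the paper's own proof makes the identical silent assumption in the step $r_{\widehat{Q}}^{1-p}\vol_{n-1}(\partial \widehat{Q})=S_p(\widehat{Q})$, so your argument is not weaker than the paper's.
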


\begin{proof}
Let $Q\in\mathcal{P}_{n,v}\cap\mathcal{I}_n$.  By H\"older's inequality,
\begin{align*}
    S_{p}(Q) &= \sum_{j=1}^{N_Q}\dist(o,F_j) ^{1-p}\vol_{n-1}(F_j)^{(1-p)+p}\\
    &\leq \left( \sum_{j=1}^{N_Q} \dist(o,F_j)\vol_{n-1}(F_j)\right)^{1-p}\left(\sum_{j=1}^{N_Q}\vol_{n-1}(F_j)\right)^{p}\\
    &=S_{0}(Q)^{1-p}S_{1}(Q)^{p}\\
    &\leq S_{0}(\widehat{Q})^{1-p}S_{1}(\widehat{Q})^{p}.
\end{align*}
Since $\widehat{Q}\in\mathcal{P}_{n,v}^{{\rm in}}$, the last line may be simplified to
\[
S_{0}(\widehat{Q})^{1-p}S_{1}(\widehat{Q})^{p} = r_{\widehat{Q}}^{1-p}\vol_{n-1}(\partial \widehat{Q}) = S_p(\widehat{Q}).
\]
The result follows.
\end{proof}

When $n=3$ and $v\geq 4$ is fixed, the volume and surface area maximizers in $\mathcal{P}_{3,v}\cap \mathcal{I}_3$ coincide for $v\in\{4,5,6,12\}$, as can be seen by comparing the results in \cite{BermanHanes1970, DHL, Toth-RegularFigures}. For $v=4$, $v=6$ and $v=12$, the maximizers are the regular tetrahedron, the regular octahedron and the regular icosahedron, respectively. For $v=5$, the maximizer is a triangular bipyramid with two vertices at the north and south poles $\pm e_3$ and the other three forming an equilateral triangle in the equator $\mathbb{S}^2\cap(\spann(e_3))^\perp$ \cite{DHL}. Moreover, the maximum volume and maximum surface area bipyramids in $\mathcal{P}_{3,v}\cap\mathcal{I}_3$ coincide \cite{DHL}.  Thus by Lemma \ref{littlewood}, we obtain the following result.
\begin{corollary}
Let $Q\in\mathcal{P}_{3,v}\cap\mathcal{I}_3$ be a bipyramid and let $p\in[0,1]$. Then
\[
S_p(Q) \leq 2(v-2)\sin\frac{\pi}{v-2}\left(\cos\frac{\pi}{v-2}\right)^{1-p}\left(1+\cos^2\frac{\pi}{v-2}\right)^{p/2}
\]
with equality if and only if $Q$ has two vertices at the north and south poles $\pm e_3$ and the remaining $v-2$ vertices form a regular $(v-2)$-gon in the equator $\mathbb{S}^2\cap(\spann(e_3))^\perp$.
\end{corollary}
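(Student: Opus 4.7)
The plan is to apply the Littlewood type inequality (Lemma \ref{littlewood}), restricted to the subclass of bipyramids inside $\mathcal{P}_{3,K}\cap\mathcal{I}_3$. Let $\widehat Q$ denote the bipyramid singled out in the equality clause: apices at $\pm e_3$, and the remaining $K-2$ vertices forming a regular $(K-2)$-gon on the equator. As flagged in the discussion preceding the corollary, \cite{DHL} shows that $\widehat Q$ simultaneously maximizes the volume and the surface area over all bipyramids in $\mathcal{P}_{3,K}\cap\mathcal{I}_3$. By its rotational symmetry about the pole axis together with the reflection through the equator, all $2(K-2)$ congruent isosceles triangular facets of $\widehat Q$ are equidistant from the origin, so $\widehat Q\in\mathcal{P}^{{\rm in}}_{3,K}$ with incenter at $o$.

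With this in hand, I would re-run the H\"older chain from the proof of Lemma \ref{littlewood} over the subclass of bipyramids: for any bipyramid $Q\in\mathcal{P}_{3,K}\cap\mathcal{I}_3$,
\[
S_p(Q) \leq S_0(Q)^{1-p}S_1(Q)^p \leq S_0(\widehat Q)^{1-p}S_1(\widehat Q)^p = r_{\widehat Q}^{1-p}\,\vol_2(\partial\widehat Q)=S_p(\widehat Q),
\]
where the penultimate equality uses that $\widehat Q$ has an inradius. For the equality characterization, the first (H\"older) step is strict unless $\dist(o,F_j)$ is independent of $j$, i.e.\ unless $Q\in\mathcal{P}^{{\rm in}}_{3,K}$ with incenter at $o$; the second step is strict unless $Q$ simultaneously attains the bipyramid maxima $S_0(\widehat Q)$ and $S_1(\widehat Q)$, which by \cite{DHL} forces $Q$ to be a rotate of $\widehat Q$. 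The incenter condition normalizes the pole axis to pass through $o$, and hence $Q=\widehat Q$ in the sense of the corollary.

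The remaining work is an elementary trigonometric evaluation of $S_p(\widehat Q)$. Each facet of $\widehat Q$ is an isosceles triangle with base $2\sin\frac{\pi}{K-2}$ (an equator edge) and two slant edges of length $\sqrt 2$ (pole to equator vertex), hence with area $\sin\frac{\pi}{K-2}\sqrt{1+\cos^2\frac{\pi}{K-2}}$; summing over $2(K-2)$ facets gives $S_1(\widehat Q)$. The equatorial regular $(K-2)$-gon has area $(K-2)\sin\frac{\pi}{K-2}\cos\frac{\pi}{K-2}$, and decomposing $\widehat Q$ into two cones over this base yields $\vol_3(\widehat Q)=\tfrac{2}{3}(K-2)\sin\frac{\pi}{K-2}\cos\frac{\pi}{K-2}$ and hence $S_0(\widehat Q)=3\vol_3(\widehat Q)$. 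The cone-volume formula then gives $r_{\widehat Q}=S_0(\widehat Q)/S_1(\widehat Q)=\cos\frac{\pi}{K-2}\big/\sqrt{1+\cos^2\frac{\pi}{K-2}}$; substituting into $S_p(\widehat Q)=r_{\widehat Q}^{1-p}S_1(\widehat Q)$ produces the asserted bound, since the square-root factors combine as $(1+\cos^2\tfrac{\pi}{K-2})^{-\frac{1-p}{2}+\frac{1}{2}}=(1+\cos^2\tfrac{\pi}{K-2})^{p/2}$.

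The main obstacle is the citation step: one must verify that \cite{DHL} indeed establishes the joint volume-and-surface-area extremality of $\widehat Q$ among bipyramids with $K$ vertices inscribed in $\mathbb{S}^2$ for every $K\geq 4$, not merely the case $K=5$ recounted in the paper's text. Granted this, Lemma \ref{littlewood} and the calculation above close the argument without further difficulty.
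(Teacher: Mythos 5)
Your proposal is correct and follows essentially the same route as the paper: invoke the results of \cite{DHL} that the bipyramid with apices at $\pm e_3$ and a regular equatorial $(K-2)$-gon simultaneously maximizes volume and surface area among inscribed bipyramids, apply the H\"older interpolation of Lemma \ref{littlewood} restricted to that subclass, and finish with the inradius computation $r_{\widehat Q}=3\vol_3(\widehat Q)/\vol_2(\partial\widehat Q)$. Your trigonometric evaluations match the paper's cited values exactly, and your extra care with the equality case and with restricting the lemma to a subclass only makes explicit what the paper leaves implicit.
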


\begin{proof}
It was shown in \cite[Cor. 2]{DHL} that $\vol_2(\partial Q)\leq 2(v-2)\sqrt{1+\cos^2\frac{\pi}{v-2}}\cdot\sin\frac{\pi}{v-2}$, and in \cite[p. 73]{DHL} that $\vol_3(Q)\leq \frac{1}{3}(v-2)\sin\frac{2\pi}{v-2}$. Equality holds in both cases if and only if $Q$ has two vertices at the north and south poles $\pm e_3$ and the remaining $v-2$ vertices form a regular $(v-2)$-gon in the equator $\mathbb{S}^2\cap(\spann(e_3))^\perp$. Let $\widehat{Q}_v$ denote this bipyramid. Then the inradius $r_{\widehat{Q}_v}$ is given by
\[
r_{\widehat{Q}_v}=\frac{3\vol_3(\widehat{Q}_v)}{\vol_2(\partial\widehat{Q}_v)}=\frac{\cos\frac{\pi}{v-2}}{\sqrt{1+\cos^2\frac{\pi}{v-2}}}.
\]
By Lemma \ref{littlewood}, for any $p\in[0,1]$ we have 
\[
S_p(Q)\leq S_p(\widehat{Q}_v)=r_{\widehat{Q}_v}^{1-p}\vol_2(\partial \widehat{Q}_v).
\]
The result follows from simple computations.
\end{proof}

%%%%%%%%%%%%%%%%%%%%%%%%%%%%

\subsection{Volume and surface area maximizers need not coincide}

In general, the volume and surface area maximizers in $\mathcal{P}_{n,v}\cap\mathcal{I}_n$ need not coincide. For $n=3$ and $v=8$, the volume maximizer was determined by Berman and Hanes \cite{BermanHanes1970}. It is unique (up to rotations) and  its vertices $p_1,\ldots,p_8$ are given by
\begin{alignat*}{2}
    p_1 &=(\sin 3\vartheta, 0,\cos 3\vartheta), \qquad &&p_5=(0,-\sin 3\vartheta,-\cos 3\vartheta),\\
    p_2 &=(\sin \vartheta, 0,\cos \vartheta), \qquad &&p_6=(0,-\sin \vartheta,-\cos \vartheta),\\
    p_3 &=(-\sin \vartheta, 0,\cos \vartheta), \qquad &&p_7=(0,\sin \vartheta,-\cos \vartheta),\\
    p_4 &=(-\sin 3\vartheta, 0,\cos 3\vartheta), \qquad &&p_8=(0,\sin 3\vartheta,-\cos 3\vartheta),
\end{alignat*}
where $\vartheta:=\arccos\left(\sqrt{(15+\sqrt{145})/40}\right)$. This polytope is shown in the figure below, which is a recreation of \cite[Fig. 1]{BermanHanes1970}. As one can check, replacing the angle parameter $\vartheta$ in each of these eight points by $\theta:=0.62$  yields a combinatorially equivalent polytope with greater surface area. This increases the surface area from approximately $8.11757$ to approximately $8.11978$.
 
 %%%%%%%%%%%%%%%%%%%%%%
 \begin{center}
\tdplotsetmaincoords{90}{60}
\def\r{1}
\begin{tikzpicture}[scale=2.6,line join=bevel, tdplot_main_coords]
    \coordinate (O) at (0,0,0);

\coordinate (P1) at ({sin(3*34.6927133)},0,{cos(3*34.6927133)});
\coordinate (P2) at ({sin(34.6927133)},0,{cos(34.6927133)});
\coordinate (P3) at ({-sin(34.6927133)},0,{cos(34.6927133)});
\coordinate (P4) at ({-sin(3*34.6927133)},0,{cos(3*34.6927133)});
\coordinate (P5) at (0,{-sin(3*34.6927133 )},{-cos(3*34.6927133)});
\coordinate (P6) at (0,{-sin(34.6927133 )},{-cos(34.6927133)});
\coordinate (P7) at (0,{sin(34.6927133 )},{-cos(34.6927133)});
\coordinate (P8) at (0,{sin(3*34.6927133 )},{-cos(3*34.6927133)});

\begin{scope}[thick]
    \draw[] (P2) -- (P3) node[below,pos=0.8,xshift=4]{};
      \draw[] (P3)--(P5)node[below,pos=0.3,xshift=-4]{};
      \draw[] (P3)--(P4) node[below,pos=0.4,xshift=-3]{};
      \draw[] (P3)--(P8) node[below,pos=0.6,xshift=3]{};
      \draw[] (P4)--(P5) node[below,pos=0.6,xshift=3]{};
       \draw[] (P4)--(P6) node[below,pos=0.6,xshift=3]{};
        \draw[] (P4)--(P7) node[below,pos=0.6,xshift=3]{};
         \draw[] (P4)--(P8) node[below,pos=0.6,xshift=3]{};
          \draw[] (P5)--(P6) node[below,pos=0.6,xshift=3]{};
           \draw[] (P6)--(P7) node[below,pos=0.6,xshift=3]{};
            \draw[] (P7)--(P8) node[below,pos=0.6,xshift=3]{};
            \draw[] (P2) -- (P8) node[below,pos=0.6]{};
\end{scope} 

\begin{scope}[dashed] %thick?
    \draw[] (P1) -- (P2) node[below,pos=0.6]{};
    \draw[] (P1) -- (P5) node[below,pos=0.6]{};
    \draw[] (P1) -- (P6) node[below,pos=0.6]{};
    \draw[] (P1) -- (P7) node[below,pos=0.6]{};
    \draw[] (P1) -- (P8) node[below,pos=0.6]{};
     \draw[] (P2)--(P5) node[below,pos=0.3,xshift=4]{};
\end{scope}

\filldraw[black] (P1) circle (0.5pt) node[anchor=west]{\tiny $p_1$};
\filldraw[black] (P2) circle (0.5pt) node[anchor=west]{\tiny $p_2$};
\filldraw[black] (P3) circle (0.5pt) node[anchor=east]{\tiny $p_3$};
\filldraw[black] (P4) circle (0.5pt) node[anchor=east]{\tiny $p_4$};
\filldraw[black] (P5) circle (0.5pt) node[anchor=east]{\tiny $p_8$};
\filldraw[black] (P6) circle (0.5pt) node[anchor=east]{\tiny $p_7$};
\filldraw[black] (P7) circle (0.5pt) node[anchor=west]{\tiny $p_6$};
\filldraw[black] (P8) circle (0.5pt) node[anchor=west]{\tiny $p_5$};
 \coordinate (L1) at (0,0,-1);
    \node[] at (L1) {};
\end{tikzpicture} 
\end{center}
%%%%%%%%%%%%%%%%%%%%%%%%%%%%%%%%%%%

\subsection{Applications}
In the context of quantum theory, it was conjectured by Kazakov \cite[Conj. 4.2]{kazakov-thesis} that the volume and surface area maximizers coincide (see also the discussion in  \cite{DHL}). The counterexample above shows that this is not always the case. It also shows that the volume and surface area discrepancies from crystallography are, in fact, distinct measures of distortion  for the coordination polyhedra of ligand atoms on the unit sphere \cite{DHL, Makovicky} when considering \emph{global} maximizers. More specifically, it was shown in \cite{DHL} that the volume and surface area discrepancies are distinct measures of distortion on the class of pyramids in $\mathcal{P}_{3,v}\cap\mathcal{I}_3$. Furthermore, the argument above shows  that the  volume and surface area discrepancies are, in general, distinct notions of measure \emph{globally} for all polytopes in $\mathcal{P}_{3,v}\cap\mathcal{I}_3$.
 
 %%%%%%%%%%%%%%%%%%%%%%%%%%%%%%%%%%
 \subsection{Open problems}
 
 We conclude by highlighting a couple of related  problems, which, to the best of our knowledge, are open.
 
 \begin{problem}\label{SA-problem}
 Determine the maximum surface area polytope in $\mathcal{P}_{3,v}\cap\mathcal{I}_3$ for $v\in\{7,8,9,10,11\}$ and $v\geq 13$.
 \end{problem}
 The global surface area maximizers among all polytopes with $v$ vertices inscribed in $\mathbb{S}^2$ have been determined analytically for the cases $v=4,5,6$ and $12$ \cite{DHL, Toth-RegularFigures,Heppes, Krammer}. 
 For $v=7$, we conjecture that the maximizer is a pentagonal bipyramid with apexes at the north and south poles $\pm e_3$ and five more forming a regular pentagon in the equator $\mathbb{S}^2\cap e_3^\perp$ (see also \cite[Conj. 1.2]{FHLPW-2022}). The maximizer among all polytopes with 7 vertices that have congruent isosceles or congruent equilateral triangular facets was recently determined in \cite{FHLPW-2022}.
 
  \begin{problem}
 Determine the maximum volume polytope in $\mathcal{P}_{3,v}\cap\mathcal{I}_3$ for $v\in\{9,10,11\}$ and $v\geq 13$.
 \end{problem}
 The global volume maximizers among all polytopes with at most 8 vertices inscribed in $\mathbb{S}^2$ can be found in the article \cite{BermanHanes1970} by Berman and Hanes, and the case $v=12$ can be found in the book \cite{Toth-RegularFigures} by L. Fejes T\'oth.
 
 %%%%%%%%%%%%%%%%%%%%%%%%%%%%%%%
 \section*{Acknowledgments}

SH would like to thank SIAM for registration and travel support provided by the National Science Foundation (NSF grant DMS – 1757085) to present this work at its Discrete Mathematics conference (DM22) held at Carnegie Mellon University in June of 2022. SH would  like to thank  the Georgia Institute of Technology and the organizers of the {\it Workshop in Convexity and High-Dimensional Probability}, held in May of 2022, for their hospitality (supported by NSF CAREER DMS-1753260). During the  workshop,   significant progress on  this manuscript was achieved. 

%%%%%%%%%%%%%%%%%%%%%%%%%%%%%%%

\bibliographystyle{plain}
\bibliography{main}

\vspace{3mm}

\noindent {\sc Department of Mathematics \& Computer Science, Longwood University, U.S.A.}

\noindent {\it E-mail address:} {\tt hoehnersd@longwood.edu}

\noindent {\it E-mail address:} {\tt ledfordjp@longwood.edu}

\end{document}